\documentclass[12pt]{amsart}
\usepackage{amsmath,amsthm,amssymb,amstext,amsxtra}
\usepackage[all]{xy}
\usepackage{mathrsfs}  
\usepackage{mathtools} 
\usepackage{colonequals}
\usepackage{enumerate}

\usepackage[backref, colorlinks=true, linkcolor=blue, citecolor=blue,urlcolor=blue]{hyperref}
\usepackage{fullpage}

\usepackage[nameinlink]{cleveref}

\newcommand{\linedef}[1]{\textsf{#1}}
\newcommand{\defi}[1]{\textit{\linedef{#1}}}

\newcommand \C {\mathbb C}

\newcommand \F {\mathbb F}

\renewcommand \O {\mathscr O}

\newcommand \Q {\mathbb Q}

\newcommand\Zl{\mathbb{Z}_{\ell}}

\newcommand \R {\mathbb R}

\newcommand \Z {\mathbb Z}

\newcommand \calI {\mathcal I}
\newcommand \calT {\mathcal T}
\newcommand \Kbar {K^{\textup{al}}}
\newcommand \Qbar {\Q^{\textup{al}}}

\newcommand \rhobar {\overline{\rho}}
\newcommand \Zhat {\widehat{\Z}}
\newcommand \Zhatp {\widehat{\Z}'}
\newcommand \Qhatp {\widehat{\Q}'}

\newcommand{\Lbun}{\mathscr{L}}
\newcommand{\scrO}{\mathscr{O}}
\newcommand{\scrL}{\Lbun}

\DeclareMathOperator{\coker}{coker}
\DeclareMathOperator{\ddiv}{div}

\DeclareMathOperator \Gal {Gal}
\DeclareMathOperator \GL {GL}

\DeclareMathOperator \Pic {Pic}

\DeclareMathOperator \Sym {Sym}

\DeclareMathOperator \Aut {Aut}

\DeclareMathOperator \Ob {Ob}
\DeclareMathOperator \opchar {char}

\DeclareMathOperator \Hom {{Hom}}
\DeclareMathOperator \sep {{s}}
\DeclareMathOperator \bPic {\mathbf{Pic}}
\newcommand \kclosed {k^{\sep}}
\newcommand \kal {k^{\textup{al}}}

\newcommand\cosl{\calI^{A_0}}

\newcommand\bT{\widehat{T}}
\newcommand\bV{\widehat{V}}

\newcommand\TAze{\calT_{A_0}}

\DeclareMathOperator{\id}{id}

\DeclareMathOperator{\impart}{Im}

\numberwithin{equation}{subsection}

\theoremstyle{plain}

\newtheorem{thm}[equation]{Theorem}

\newtheorem{prop}[equation]{Proposition}

\newtheorem{lem}[equation]{Lemma}

\theoremstyle{definition}
\newtheorem{defin}[equation]{Definition}
\newtheorem{example}[equation]{Example}
\newtheorem{exm}[equation]{Example}

\theoremstyle{remark}
\newtheorem{remark}[equation]{Remark}
\newtheorem{rmk}[equation]{Remark}

\newenvironment{enumalph}
{\begin{enumerate}}
{\end{enumerate}}

\hyphenpenalty = 1000

\title{A framework for Tate modules of abelian varieties under isogeny}

\author{Sarah Frei}
\address{Department of Mathematics, Dartmouth College, Kemeny Hall, Hanover, NH 03755, USA}
\email{sarah.frei@dartmouth.edu}

\author{Katrina Honigs}
\address{Department of Mathematics, Simon Fraser University, 8888 University Drive, Burnaby, British Columbia V5A 1S6, Canada}
\email{khonigs@sfu.ca}

\author{John Voight}
\address{Department of Mathematics, Dartmouth College, Kemeny Hall, Hanover, NH 03755, USA; Carslaw Building (F07), Department of Mathematics and Statistics, University of Sydney, NSW 2006, Australia}
\email{jvoight@gmail.com}

\date{\today}

\keywords{Abelian varieties, Tate modules, Galois representations}

\subjclass[2020]{Primary: 11G10, 11F80. Secondary: 14K15, 11G05}

\begin{document}

\begin{abstract}
We explain the linear algebraic framework provided by Tate modules of isogenous abelian varieties in a category-theoretic way.
\end{abstract}

\maketitle

\section{Introduction}

\subsection{Setup}

Let $K$ be a number field with algebraic closure $\Kbar$.  Let $A$ be an abelian variety over $K$ of dimension $g \colonequals \dim A \geq 1$. For example, we may take $A=E$ an elliptic curve over $K$, the case $g=1$.  
Many important arithmetic features of $A$ are reflected in its torsion subgroups $A[n](\Kbar) \simeq (\Z/n\Z)^{2g}$ for $n \geq 1$.  The equations that define the $n$-torsion subgroup define a variety $A[n]$ of dimension zero over $K$; and the finite set of points $A[n](\Kbar)$ is defined over its splitting field, a minimal finite Galois extension of $K$ denoted $K(A[n])$.  The Galois group $\Gal(K(A[n])\,|\,K)$ acts on $A[n]$ preserving the group law, so each element acts via an element of $\Aut(A[n](\Kbar))$, giving an injective homomorphism
\[ \Gal(K(A[n])\,|\,K) \hookrightarrow \Aut(A[n](\Kbar)) \simeq \GL_{2g}(\Z/n\Z). \]
It is convenient to lift this to the absolute Galois group $\Gal_K \colonequals \Gal(\Kbar\,|\,K)$ to obtain a linear representation
\begin{equation} \label{eqn:rhoAbarn}
\rhobar_{A,n} \colon \Gal_K \to \Aut(A[n](\Kbar)) \simeq \GL_{2g}(\Z/n\Z)
\end{equation}
with $\ker \rhobar_{A,n} = \Gal(\Kbar\,|\,K(A[n]))$.  Studying the Galois representation $\rhobar_{A,n}$---for example, using techniques in number theory, group theory, and linear algebra---remains an essential technique for understanding $A$ and is itself an interesting pursuit.  

Rather than working with one $n$ at a time, it is convenient to package all of them together by forming the adelic Tate module 
\begin{equation}
\widehat{T} A \colonequals \varprojlim_n A[n](\Kbar) \simeq \Zhat^{2g} 
\end{equation}
where $\Zhat \colonequals \varprojlim_n \Z/n\Z \simeq \prod_p \Z_p$, and the associated Galois representation 
\begin{equation}
\rho_A \colon \Gal_K \to \Aut(\widehat{T} A) \simeq \GL_{2g}(\Zhat). 
\end{equation}
One obtains the representations $\rhobar_{A,n}$ by composing $\rho_A$ with reduction modulo $n$.  Allowing denominators, we obtain the adelic Tate representation $\widehat{V} A \colonequals \widehat{T} A \otimes \Q \simeq \widehat{\Q}^{2g}$.  

In this article, we unpack how the adelic Tate module (with its Galois action) changes under isogeny.  This is a well-known, fundamental tool that is used frequently in arithmetic geometry.  For example, in his work on local-global principles for torsion, Katz \cite[p.~482]{Katz} calls this the ``dictionary between $A'\hspace*{.03cm}$'s which are $l$-power-isogenous to $A$ over $K$ and $\Gal(\overline{K}/K)$-stable \emph{lattices} in $T_l(A) \otimes \Q$''.  This dictionary as a bijection is also stated by Lan \cite[\S 1.3.5]{MR3186092}, again without proof.  Here, we formulate this dictionary within a category-theoretic framework, so that future matrix calculations with Tate modules can be understood via commutative diagrams.  

\subsection{Results}

Let $A_0$ be a (fixed) abelian variety over $K$.  
Let $\varphi \colon A_0 \to A$ be an isogeny over $\Kbar$.  Then $\varphi$ induces a natural inclusion $\widehat{T}\varphi \colon \widehat{T} A_0 \hookrightarrow \widehat{T} A$ of adelic Tate modules, which becomes an isomorphism $\widehat{V}\varphi \colon \widehat{V} A_0 \xrightarrow{\sim} \widehat{V} A$.  Composing the natural inclusion $\widehat{T} A \hookrightarrow \widehat{V} A$ with $(\widehat{V}\varphi)^{-1}$, we obtain an inclusion $\widehat{T} A \hookrightarrow \widehat{V} A_0$ whose image is a sublattice denoted $\Lambda\varphi \subseteq \widehat{V} A_0$.  

Our first main result is as follows (proven in more generality in \Cref{thm:main-full}).  

\begin{thm}\label{thm:main}
The association $\varphi \mapsto \Lambda\varphi$ determines a functor $\Lambda$ from 
\begin{center}
the category of isogenies $\varphi \colon A_0 \to A$ over $\Kbar$
\end{center}
to 
\begin{center}
the category of sublattices of $\widehat{V}A_0$ containing $\widehat{T}A_0$.  
\end{center}
The functor $\Lambda$ has the following properties.
\begin{enumalph}
\item $\Lambda$ is an equivalence of categories.
\item The functor is equivariant with respect to the action of $\Gal_K$, restricting to an equivalence between the category of abelian varieties that are isogenous (over $K$) to $A_0$ and the category of $\Gal_K$-stable sublattices of $\widehat{V}A_0$.  
\item For an isogeny $\varphi \colon A_0 \to A$, there is a natural isomorphism $\ker \varphi \simeq \Lambda\varphi/\Lambda(\id_{A_0}) = \coker \bT \varphi$ that is $\Gal_K$-equivariant.
\end{enumalph}
\end{thm}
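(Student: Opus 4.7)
The key observation is that for any isogeny $\varphi\colon A_0 \to A$, the composition $\widehat{T}A_0 \xrightarrow{\widehat{T}\varphi} \widehat{T}A \xrightarrow{(\widehat{V}\varphi)^{-1}} \widehat{V}A_0$ coincides with the natural inclusion $\widehat{T}A_0 \hookrightarrow \Lambda\varphi$. In particular $\Lambda\varphi \supseteq \widehat{T}A_0 = \Lambda(\id_{A_0})$, and the quotient $\Lambda\varphi/\Lambda(\id_{A_0})$ equals $\coker\widehat{T}\varphi$ canonically, establishing the equality in (c). Functoriality in (a) is immediate from the identity $(\widehat{V}\varphi_1)^{-1} = (\widehat{V}\varphi_2)^{-1}\circ\widehat{V}f$ for any morphism $f\colon\varphi_1\to\varphi_2$ (i.e., $f\varphi_1 = \varphi_2$), which gives $\Lambda\varphi_1 \subseteq \Lambda\varphi_2$. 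Faithfulness is automatic: since an isogeny is a categorical epimorphism, $f$ is determined by $f\varphi_1 = \varphi_2$, and both source and target categories have at most one morphism between any pair of objects.

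To finish (c), set $H = \ker\varphi$ and apply the snake lemma to the short exact sequence $0 \to H \to A_0 \xrightarrow{\varphi} A \to 0$ together with multiplication by $n$, choosing $n$ to kill $H$; this yields
\[
0 \to H \to A_0[n] \to A[n] \to H \to 0.
\]
Passing to the inverse limit over $n$ along divisibility, the left-hand $\varprojlim_n H[n]$ vanishes (the transition maps are eventually multiplication by a multiple of the exponent of $H$), while $\varprojlim_n H/nH$ stabilizes to $H$, producing the short exact sequence $0 \to \widehat{T}A_0 \xrightarrow{\widehat{T}\varphi} \widehat{T}A \to H \to 0$; all maps are visibly $\Gal_K$-equivariant. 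Essential bijectivity in (a) now follows by using (c) to parametrize both sides: isomorphism classes of isogenies from $A_0$ correspond to their kernels, the finite subgroups of $A_0(\Kbar)_{\textup{tors}}$; under the identification $\widehat{V}A_0/\widehat{T}A_0 \simeq A_0(\Kbar)_{\textup{tors}}$ these match exactly the sublattices $\Lambda \supseteq \widehat{T}A_0$ with $\Lambda/\widehat{T}A_0$ finite.

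Part (b) is then largely formal: the functor $\Lambda$ is built entirely from constructions that are natural under $\Gal_K$ (Tate modules, extension of scalars to $\Q$, inversion of isomorphisms), so $\Lambda(\varphi^\sigma) = \sigma(\Lambda\varphi)$. The equivalence follows by restricting to Galois-fixed objects on each side: the $\Gal_K$-fixed isogenies from $A_0$ are precisely those defined over $K$ (by Galois descent for abelian varieties and morphisms), equivalent to giving an abelian variety over $K$ together with a $K$-isogeny from $A_0$, and the $\Gal_K$-fixed sublattices are exactly the $\Gal_K$-stable ones. The main obstacle I expect is the bookkeeping in (b)---formalizing the $\Gal_K$-action on the category of isogenies (noting that $A_0$ is fixed over $K$ while $A$ varies over $\Kbar$) and verifying that the restriction yields an equivalence at the level of morphisms, not merely a bijection on isomorphism classes---together with reconciling the paper's precise notion of ``sublattice'' with the parametrization used in (a).
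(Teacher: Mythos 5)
Your approach is essentially the paper's: part (c) by a snake-lemma argument, part (a) via the correspondence between finite subgroups $H\leq A_0[m](\Kbar)$ and sublattices $\widehat{T}A_0\subseteq L\subseteq (1/m)\widehat{T}A_0$ (the paper's \Cref{prop:Psiphi}), and part (b) by equivariance and descent. Two differences are worth noting. First, for (c) the paper applies the snake lemma to the exact sequences $0\to A[\ell^n]\to A[\ell^{n+r}]\xrightarrow{\cdot\ell^n} A[\ell^r]\to 0$, which cleanly produces a connecting map into $T_\ell A'/\varphi(T_\ell A)$; your version applies the snake lemma to multiplication by $n$ and then passes to the inverse limit of the resulting four-term sequence $0\to H\to A_0[n]\to A[n]\to H\to 0$. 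That works, but the transition from the four-term exact sequence to the limit requires splitting into two short exact sequences and checking that the relevant $\varprojlim^1$ terms vanish (Mittag--Leffler: the transition maps on $H[n]$ are eventually zero and on the intermediate images eventually surjective); you gesture at this but don't carry it out. Second, your faithfulness argument is more conceptual than the paper's: observing that both the coslice category under $A_0$ (because an isogeny is an epimorphism) and the category of sublattices-with-containments have at most one morphism between any two objects makes faithfulness immediate, whereas the paper just cites faithfulness of $\widehat{T}$. One substantive gap: you flag but do not close the fullness question in (b). The paper handles it by invoking the Tate isogeny theorem (Tate over finite fields, Faltings over number fields). However, your own setup suggests this is avoidable here: with morphisms in $\mathcal{T}_{A_0}$ being mere containments, a containment $\Lambda\varphi_1\subseteq\Lambda\varphi_2$ forces $\ker\varphi_1\subseteq\ker\varphi_2$ by (c), so $\varphi_2$ factors through $\varphi_1$ via a unique isogeny $\psi$, which descends to $K$ by uniqueness and Galois equivariance of $\varphi_1,\varphi_2$. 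So your instinct that (b) is ``largely formal'' is sound; if you spell out this factoring argument you get fullness directly, and the appeal to Faltings (needed only if one enlarges the morphism set in the lattice category to arbitrary $\Gal_K$-equivariant maps) can be dropped for the category as actually defined.
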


The morphisms in the two categories in \Cref{thm:main} are described in \cref{tw.on}.  

With this conceptual framework in mind, we then turn to some computations, explaining how to extract explicit change of basis matrices that explain how isogenous abelian varieties have conjugate Galois representations; we similarly track how polarizations change.  
We provide an extended example that came up in recent work \cite{FHV_research}, giving an example of abelian surfaces $A$ such that the $\ell$-torsion of $A$ and its dual $A\spcheck$ are not isomorphic as Galois representations, i.e., $\rho_{A,\ell} \not\simeq \rho_{A\spcheck,\ell}$.
\subsection{Contents}

We begin by providing background on abelian varieties and relevant structures such as polarizations and duality in \cref{sec:background}.  
In \cref{sec:category} we explain (in a categorical context) how Galois actions on Tate modules change under isogenies, along with an illustrating example. We define the functor that satisfies \Cref{thm:main} in \cref{tw.on}.  We concretely work with this framework by providing change of basis matrices in \cref{sec:changeofbasis}.  
In \cref{example.section}, we work through a detailed example elaborating on the main result of \cite{FHV_research}. 
\medskip

We hope that this will serve as a useful framework for others interested in studying Galois images of torsion subgroups and Tate modules of abelian varieties.

\subsection*{Acknowledgements}

The authors would like to thank Asher Auel, Nils Bruin, Ryan Chen, and Bjorn Poonen for helpful comments. Many thanks also to the anonymous referees for their feedback. Frei was supported by an AMS-Simons Travel grant and by NSF grant DMS-2401601. Honigs was supported by an NSERC Discovery grant.
Voight was supported by grants from the Simons Foundation: (550029, JV) and (SFI-MPS-Infra\-structure-00008650, JV).

\section{Background on abelian varieties}\label{sec:background}

In this section, we set notation and give background on
isogenies and Tate modules (\cref{subsec:isogenies}), dual abelian varieties (\cref{subsec:dual}), 
the Weil pairing (\cref{subsec:Weil}),
and polarizations (\cref{subsec:polar}).  For further reading, a standard reference for abelian varieties is the book of Mumford \cite{Mumford}; we also recommend Hindry--Silverman \cite[Part A]{HinSil}, the work of Milne \cite{Milneabvar,Milne2}, and Birkenhake--Lange \cite{Birkenhake-Lange} for complex abelian varieties.  To just get started with introduction via the perspective of elliptic curves, we suggest Silverman \cite{SilvermanAEC}.  We endeavour in this section to give a motivated version suitable for study in our formalism.  

A key result is \Cref{kercoker}; here, we provide a simple proof.

\subsection{Isogenies and Tate modules}\label{subsec:isogenies}

Let $k$ be a field and let $p \colonequals \opchar k$, allowing $p=0$.  Let $\kclosed \subseteq \kal$ be a separable closure of $k$ and an algebraic closure of $k$, respectively.  Let \(A\) and \(A'\) be abelian varieties over $k$ with common dimension $g \colonequals \dim A = \dim A'$.  To avoid trivialities, we suppose that $g \geq 1$.  

An \defi{isogeny} \(\varphi \colon A \to A'\) is a surjective homomorphism, or equivalently a homomorphism with finite kernel $\#(\ker \varphi)(\kal)<\infty$.  (Some authors take the zero map to be an isogeny; we do not.)  An isogeny is \defi{separable} if the corresponding finite extension $k(A) \supseteq k(A')$ of function fields is separable, or equivalently $(\ker \varphi)(\kal)=(\ker \varphi)(\kclosed)$ and $[k(A):k(A')]=\#(\ker \varphi)(\kclosed)$---analogous to the usual condition for a finite extension of fields to be Galois.  (The reader may wish to focus on the case $k=\Q$ where all isogenies are separable; but the setup permits an arbitrary field.)  

We pause to give a bit of motivation before proceeding further.  Over the complex numbers $k=\C$, abelian varieties are complex tori, and isogenies can be understood using linear algebra.  Indeed, we have an isomorphism $A(\C) \simeq V/\Lambda$ where $V \simeq \C^g$ is a complex vector space of dimension $g$ and $\Lambda \subseteq V$ is a lattice of rank $2g$.  (More precisely, we take $V = \Hom(\Omega^1(A),\C)$ dual to the space $\Omega^1(A)$ of holomorphic $1$-forms and $\Lambda = H_1(A,\Z)$.)  Then every isogeny of complex abelian varieties $V/\Lambda \to V'/\Lambda'$ is defined by a  $\C$-linear isomorphism $V \to V'$ such that $\varphi(\Lambda) \subseteq \Lambda'$, and  
\begin{equation} \label{eqn:kervarphi}
\ker \varphi = \varphi^{-1}(\Lambda')/\Lambda \cong \Lambda'/\varphi(\Lambda) = \coker(\phi \colon \Lambda \hookrightarrow \Lambda').
\end{equation}
This is such a convenient description!  We seek to replicate it over an arbitrary field $k$, and Tate modules allow us to consider (separable) isogenies in an analogous way.

We can begin linearizing, as in the introduction, by working with a torsion subgroup $A[n]$---but instead of working with just one, to complete the analogy we package them together, as follows.  

Let $\ell \neq p= \opchar k$ be prime.  The \defi{$\ell$-adic Tate module} of $A$ is the projective limit 
\begin{equation}
T_\ell A \colonequals \varprojlim_{j} A[\ell^j](\kclosed) \simeq \Z_\ell^{2g}.
\end{equation}
Concretely, an element of $T_\ell A$ is a sequence $P=(P_1,P_2,\dots)$ where $P_j \in A[\ell^j](\kclosed)$ and $\ell P_j = P_{j-1}$ for all $j \geq 2$.  

A homomorphism \(\varphi \colon A \to A'\) induces a homomorphism \(T_\ell(\varphi) \colon T_\ell A \to T_\ell A'\), and when \(\varphi\) is an isogeny, the finiteness of the kernel implies that \(T_\ell(\varphi)\) is injective.  

For complex abelian varieties (when $k=\C$), we recover in this way the $\ell$-adic part of the description above. We have 
\[ A[\ell^j](\C) = (\ell^{-j}\Lambda)/\Lambda \cong \Lambda/\ell^j \Lambda \]
for all $j$, so $T_\ell A \cong \Lambda \otimes \Z_\ell$ is the $\ell$-adic completion of $\Lambda$.  
An isogeny $\varphi \colon V/\Lambda \to V'/\Lambda'$ induces a map of $\ell$-adic Tate modules $T_\ell(\varphi) \colon \Lambda \otimes \Z_\ell \to \Lambda' \otimes \Z_\ell$ (applying the isogeny in each component, checking compatibility); and taking the $\ell$-primary subgroups in \eqref{eqn:kervarphi} we see that 
\begin{equation} \label{eqn:kerTell}
(\ker \varphi)(\C)_\ell \cong (\Lambda' \otimes \Z_\ell)/(\varphi(\Lambda) \otimes \Z_\ell) \cong \coker T_\ell(\varphi).
\end{equation}
The following proposition shows that \eqref{eqn:kerTell} extends in general.  
\begin{prop}\label{kercoker}
Let $\varphi\colon A\to A'$ be an isogeny of abelian varieties with kernel $H$.  Let $H[\ell^\infty]$ be the $\ell$-primary (or $\ell$-Sylow) subgroup of $H$.  Then $H[\ell^\infty]$ is naturally isomorphic to the cokernel of $T_\ell(\varphi)\colon T_\ell A\to T_\ell A'$.  
\end{prop}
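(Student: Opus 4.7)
My plan is to apply the snake lemma to multiplication by $\ell^j$ on the short exact sequence
$$0 \to H(\kal) \to A(\kal) \xrightarrow{\varphi} A'(\kal) \to 0$$
of abelian groups, which is exact because $\varphi$ is surjective on $\kal$-points. Since $A(\kal)$ and $A'(\kal)$ are divisible, $[\ell^j]$ is surjective on them, and the snake lemma yields the four-term exact sequence
$$0 \to H[\ell^j] \to A[\ell^j] \to A'[\ell^j] \xrightarrow{\delta_j} H/\ell^j H \to 0.$$
Since $\ell \neq p$, the $\ell$-power torsion is \'etale, so $\kal$-points and $\kclosed$-points coincide and this is compatible with the definition of $T_\ell$ used in the paper.

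Next I would check that these sequences assemble into a short exact sequence of inverse systems in $j$. The transition maps on $H[\ell^j]$, $A[\ell^j]$, and $A'[\ell^j]$ are multiplication by $\ell$, as prescribed by the construction of the Tate module. The one point that requires care is to verify that the compatible transition map $H/\ell^{j+1} H \to H/\ell^j H$ is the tautological reduction-mod-$\ell^j$ map; this is a direct unwinding of the snake lemma construction of $\delta_j$, namely: given $P' \in A'[\ell^j]$, lift to $P \in A(\kal)$ and take the class of $\ell^j P \in H$. This compatibility check is the main---and essentially only---obstacle in the argument.

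Finally, I would take $\varprojlim_j$. Because $H$ is finite, all terms in sight are finite, so the inverse systems are Mittag--Leffler and $\varprojlim$ preserves exactness. The individual limits are straightforward: $\varprojlim_j H[\ell^j] = 0$ because multiplication by $\ell$ is eventually zero on the finite $\ell$-group $H[\ell^\infty]$; $\varprojlim_j A[\ell^j] = T_\ell A$ and $\varprojlim_j A'[\ell^j] = T_\ell A'$ by definition; and $\varprojlim_j H/\ell^j H = H[\ell^\infty]$, since for $j$ large enough to annihilate $H[\ell^\infty]$ the quotient stabilizes with identity transitions. Splicing these gives the exact sequence
$$0 \to T_\ell A \xrightarrow{T_\ell \varphi} T_\ell A' \to H[\ell^\infty] \to 0,$$
and the resulting isomorphism $H[\ell^\infty] \cong \coker T_\ell(\varphi)$ is manifestly natural and $\Gal_k$-equivariant, since every map in the construction was induced by $\varphi$ or by $[\ell^j]$.
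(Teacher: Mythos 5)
Your proof is correct, but it runs the snake lemma in the transposed direction from the paper's argument, and the two routes have genuinely different costs. The paper first assembles, for each of $A$ and $A'$, an exact sequence $0 \to T_\ell A \to \varprojlim_n A[\ell^{n+r}] \to A[\ell^r] \to 0$ and then snakes along the vertical map $\varphi$; the price is that the tail of the resulting six-term sequence does not obviously vanish, and the paper kills it by invoking the complementary isogeny $\psi$ with $\varphi\psi = [\ell^r]$. You instead snake along $[\ell^j]$ acting on $0 \to H(\kal) \to A(\kal) \to A'(\kal) \to 0$ and pass to the limit in $j$ afterwards; here surjectivity comes for free from divisibility of $A(\kal)$ and $A'(\kal)$, no auxiliary isogeny is needed, and the identification of the limit term $\varprojlim_j H/\ell^j H$ with $H[\ell^\infty]$ falls out of the stabilization of the quotients. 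The price you pay is the exactness of $\varprojlim$ applied to a four-term sequence, which strictly speaking requires splitting it into two short exact sequences and using that $\varprojlim^1$ vanishes for inverse systems of finite groups (Mittag--Leffler), plus the transition-map compatibility on the $H/\ell^jH$ term that you correctly flag as the one delicate point; your unwinding of $\delta_j$ (or, equivalently, naturality of the snake applied to the square relating $[\ell^{j+1}]$ and $[\ell^j]$ via $[\ell]$ and the identity) settles it. Both arguments are complete; yours is closer to the standard textbook derivation and generalizes with no change to the statement that $0 \to T_\ell A \to T_\ell A' \to H[\ell^\infty] \to 0$ is exact, while the paper's has the virtue of producing the connecting isomorphism $\delta$ in the explicit form used later in \Cref{cyclic_isog}.
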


\begin{proof}
  
We will show, via the snake lemma, that this map is induced by a connecting homomorphism.

Abbreviate $A_j \colonequals A[\ell^j](\Kbar)$.  For $n,r \geq 1$, the sequence
\[ 0 \to A_n \to A_{n+r} \xrightarrow{\cdot \ell^n} A_r \to 0 \]
is exact.  The maps are compatible, so for all $r \geq 1$ and taking the limit over $n$, we claim that the sequence
\[ 0 \to T_\ell A \to \varprojlim_{n} A_{n+r} \to A_r \to 0 \]
is exact.  The left map is just interpreting the same sequence in two different ways, which is visibly injective.  Indeed, projective limits are always left exact so it suffices to show that the map to $A_r$ is surjective.  This map takes $(P_n)_n$ with $P_n \in A_{n+r}$ and maps to the common element $\ell P_1 = \ell^n P_n$ for all $n$.  Hence, for any $Q=P_0 \in A_r$, a lift $(P_n)_n \in \varprojlim_n A_{n+r}$ is obtained by inductively choosing $P_n \in A_{n+r}$ satisfying $\ell P_n = P_{n-1}$ for $n \geq 1$.

Repeating the abbreviation with $A'$ and $H$, we put the exact sequences together to get:
\begin{equation}
\begin{aligned}
\xymatrix{
0 \ar[r] & T_\ell A \ar[r] \ar[d]^{\varphi} & \varprojlim_{n} A_{n+r} \ar[r]^(0.6){\cdot \ell^n} \ar[d]^{\varphi} & A_r \ar[r] \ar[d]^{\varphi} & 0 \\
0 \ar[r] & T_\ell A' \ar[r] & \varprojlim_{n} A'_{n+r} \ar[r]^(0.6){\cdot \ell^n} & A'_r \ar[r] & 0
} 
\end{aligned}
\end{equation}
We now apply the snake lemma.  The kernel of the middle vertical map is $0$ since $\ker \varphi$ is finite, so we obtain the exact sequence
\begin{equation} \label{eqn:iwontTell}
0 \to H_r \xrightarrow{\delta} T_\ell A'/\varphi(T_\ell A) \to \varprojlim_n A'_{n+r}/\varprojlim_n \varphi(A_{n+r}).
\end{equation}
To be explicit, the connecting map $\delta$ is defined as follows: for $P \in H_r$, lift to $(P_n)_n \in \varprojlim_n A_{n+r}$ (as in the end of the previous paragraph) and map 
\begin{equation} 
\delta(P)=\varphi((P_n)_n)=(\varphi(P_1),\varphi(P_2),\dots) \in T_\ell A'. 
\end{equation}
(We do not need to, but can check that $\ell^n \varphi(P_n)=\varphi(\ell^n P_n)=\varphi(P)=O$ so $\varphi(P_n) \in A'[\ell^n]$.)

Now we take $r \geq 1$ to be such that $\ell^r=\#H_\infty$; then $H_r=H_\infty$.  To finish, we need to check that the final map in \eqref{eqn:iwontTell} is the zero map.  Indeed, the class of $(Q_n)_n \in T_\ell A' \leq \prod_n A'_n$ maps to the class of $(Q_n)_n \in \prod_n A'_{n+r}$, so we want to show for all $n \geq 1$ and all $Q_n \in A'_n$ that $Q_n=\varphi(P_n')$ for some $P_n' \in A_{n+r}$.  But $\deg \varphi = \ell^r$ so there exists an isogeny $\psi \colon A' \to A$ such that $\ell^r=\varphi\psi$;
therefore, if $P_n \in A_{n+r}$ is such that $\ell^r P_n=Q_n$ then $P_n' \colonequals \psi(P_n)$ has $\varphi(P_n')=\ell^r P_n=Q_n$ and since $\ell^{n+r} P_n = \ell^n Q_n = O$ we have $\ell^{n+r} P_n' = \psi(\ell^{n+r} P_n) = O$ and so $P_n' \in A_{n+r}$ as desired.

Finally, we address naturality.
For any finite subgroup $H\leq H'\leq A$, there is an isogeny $A\to A''\simeq A/H'$ factoring uniquely through $A\to A'$. 
The isomorphism $H'[\ell^{\infty}]\simeq \coker(T_{\ell}A\to T_{\ell}A'')$
given by the above construction restricts to the isomorphism $H\simeq \coker(T_{\ell}A\to T_{\ell}A')$
because the connecting morphism from the snake lemma is compatible with this restriction. 
\end{proof}

\begin{example}\label{cyclic_isog}
Consider the case $g=1$, so $A=E$ is an elliptic curve.  Choose a basis  
$P_1=(P_{1,n})_n$, $P_2=(P_{2,n})_n$ 
for $T_\ell E$. 
Consider the isogeny $\varphi\colon E\to E'$
with kernel $H=H_\ell=\langle P_{1,1}\rangle$. That is, $E'$ is the quotient $E/\langle P_{1,1}\rangle$.  Then it is straightforward to verify that $P_1' \colonequals (\varphi(P_{1,{n+1}}))_{n}$ and $P_2' \colonequals (\varphi(P_{2,n}))_{n}$ is a basis for $T_\ell E'$.

As in the proof of \Cref{kercoker}, we compute the isomorphism $\delta \colon H \xrightarrow{\sim} \coker(T_\ell(\varphi))$, both groups isomorphic to $\Z/\ell\Z$.  We lift $P_{1,1}$ to $(P_{1,2},P_{1,3}\dots) \in T_\ell E$; so $\delta(P_{1,1})$ is the class of 
$(\varphi(P_{1,{n+1}}))_{n}=P_1'\in T_\ell E'$
in $\coker(T_{\ell}(\varphi))$.  
\end{example}

As in the introduction, we simultaneously keep track of all $\ell$-adic Tate modules (where $\ell\neq p = \opchar k$) as follows.  Let 
\begin{equation}
\Zhatp \colonequals \prod_{\ell \neq p} \Z_\ell
\end{equation}
be the prime-to-$p$ profinite completion of $\Z$, and let $\Qhatp \colonequals \Zhatp \otimes_{\Z} \Q$.  We define the \defi{(prime-to-$p$) adelic Tate module} of $A$ by
\begin{equation} \label{adelicTate}
\bT A \colonequals \varprojlim_{\substack{ n \\ p \nmid n}} A[n](\kclosed) \simeq \prod_{\ell \neq p} T_\ell A.
\end{equation}
(We write $\bT A$ instead of $\bT' A$ to ease notation.)  Then $\bT A$ is a free $\Zhatp$-module of rank $2g$.  The previous constructions extend to $\bT$ in each component, in particular from an isogeny $\varphi \colon A \to A'$ we obtain a homomorphism $\bT(\varphi) \colon \bT A \to \bT A'$.  Let $\bV A \colonequals \bT A \otimes_\Z \Q$ be the \defi{(prime-to-$p$) adelic Tate representation} of $A$.  

\begin{remark}
In the adelic Tate module, if $p \neq 0$ we can also include the case $\ell=p$; but in this case, the $p$-adic Tate module in characteristic $p$ behaves quite differently---it is quite meager in comparison, due to inseparability.  One can use the Dieudonn\'e module instead: see for example Fontaine \cite[Chapitre III]{Dieudonne}.  
\end{remark}

We then obtain the following more general statement.

\begin{prop}\label{kercoker:2}
Let $\varphi\colon A\to A'$ be a separable isogeny of abelian varieties with kernel $H$.  Then $H$ is naturally isomorphic to the cokernel of $\widehat{T}(\varphi)\colon \widehat{T} A\to \widehat{T} A'$.  
\end{prop}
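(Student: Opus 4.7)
The plan is to deduce this from Proposition \ref{kercoker} by assembling the $\ell$-adic isomorphisms as $\ell$ ranges over primes different from $p$; the heavy lifting (the snake lemma argument) has already been done at each prime individually.

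First, I would unwind the definition to observe that there is a canonical decomposition $\widehat{T} A = \prod_{\ell \neq p} T_\ell A$ which is compatible with isogenies, so that $\widehat{T}(\varphi)$ is the product of the individual maps $T_\ell(\varphi)$. Since cokernels commute with products in the category of $\Zhatp$-modules, this yields
\begin{equation*}
\coker \widehat{T}(\varphi) \simeq \prod_{\ell \neq p} \coker T_\ell(\varphi).
\end{equation*}

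Second, because $\varphi$ is separable, the kernel $H$ is étale and therefore identified with the finite abelian group of its $\kclosed$-points, which decomposes canonically as a direct sum of its $\ell$-primary components $H[\ell^\infty]$. Applying Proposition \ref{kercoker} at each prime $\ell \neq p$ supplies a natural isomorphism $\delta_\ell \colon H[\ell^\infty] \xrightarrow{\sim} \coker T_\ell(\varphi)$. Since only finitely many primes $\ell$ divide the order of $H$, the product $\prod_{\ell \neq p} \delta_\ell$ coincides with the direct sum and assembles into the desired map $H \xrightarrow{\sim} \coker \widehat{T}(\varphi)$.

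The main technical point to watch is the handling of the $p$-part in positive characteristic: for the assembled map to exhaust all of $H$ we need the $p$-primary component of $H$ to vanish (automatic in characteristic zero, and implicit in the separable hypothesis as used here, forcing $p \nmid \deg \varphi$ so that $\widehat{T}$ sees the full kernel). Under this proviso, naturality in $\varphi$ and $\Gal_K$-equivariance of the assembled isomorphism follow tautologically from the corresponding properties of each $\delta_\ell$ in Proposition \ref{kercoker}, so no further argument is required.
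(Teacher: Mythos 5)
Your proposal takes essentially the same route as the paper: decompose $\widehat{T}A = \prod_{\ell \neq p} T_\ell A$, apply \Cref{kercoker} prime by prime, and reassemble using the primary decomposition of $H$. The supporting observations --- that cokernels commute with products of modules, and that the product of the connecting maps $\delta_\ell$ agrees with the direct sum since only finitely many are nonzero --- are correct.

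The one inaccuracy is your claim that separability forces $p \nmid \deg\varphi$. It does not: separability of $\varphi$ means precisely that $\ker\varphi$ is an \'etale group scheme, and over $\kclosed$ this is a constant finite abelian group which may contain $p$-torsion. For instance, the Verschiebung $V \colon E^{(p)} \to E$ on an ordinary elliptic curve $E$ in characteristic $p$ is a separable isogeny of degree $p$ with kernel $\Z/p\Z$; yet $T_\ell(V)$ is an isomorphism for every $\ell \neq p$, so $\coker\widehat{T}(V) = 0$, which is not $\ker V$. Thus the statement genuinely requires the extra hypothesis $p \nmid \deg\varphi$, or else should be read as identifying $\coker\widehat{T}(\varphi)$ with the prime-to-$p$ part of $H$. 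This defect is inherited from the paper's own formulation, whose one-line proof is silent on the point; your reassembly argument is sound once that hypothesis is supplied.
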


\begin{proof}
The statement follows from \Cref{kercoker} by comparing $\ell$-primary parts.
\end{proof}

\subsection{Duals}\label{subsec:dual}

Associated naturally to $A$ is the \defi{dual abelian variety} \(A\spcheck \colonequals \bPic^0_A\). 
More precisely, the \defi{relative Picard functor} $\underline{\Pic}_{A}$, which associates to a $k$-scheme $S$ the group $\Pic(A \times_k S)$, is represented by a group scheme $\bPic_A$ over $k$, and $\bPic^0_A$ is the connected component containing the identity (the structure sheaf $\mathscr{O}_A$) \cite[section~3]{Kleiman}.  
The group $\Pic^0(A)=\bPic_A^0(k)$ consists of those line bundles $\Lbun$ on $A$ such that $\tau_P^*\Lbun_{\kal} \simeq \Lbun_{\kal}$ for all $P \in A(\kal)$, where $\tau_P \colon A \to A$ is the translation by $P$ map, defined by $Q \mapsto Q+P$.

Over the complex numbers $k=\C$, the dual admits a concrete description.  For $A(\C) \simeq V/\Lambda$ where $V \simeq \C^g$, let 
\begin{equation}
V^* \colonequals \Hom_{\overline{\C}}(V,\C)
\end{equation}
be the $\C$-vector space of $\C$-\emph{antilinear} functionals: that is, $f \in V^*$ means $f \colon V \to \C$ is an $\R$-linear map such that $f(ax)=\overline{a}x$ for all $x \in V$ and $a \in \C$.  (Antilinear functionals come naturally out of Hermitian forms, where one component is $\C$-linear but the other is $\C$-antilinear.)  The imaginary part of the evaluation map 
\begin{equation} \label{eqn:vvC}
\begin{aligned}
V^* \times V &\to \C \\
(f,x) &\mapsto \impart f(x)
\end{aligned}
\end{equation}
defines a canonical, nondegenerate, $\R$-bilinear form.  The dual of $\Lambda$ under this pairing, namely
\begin{equation}
\Lambda^* \colonequals \{f \in V^* : \impart f(x) \subseteq \Z \} \subseteq V^* 
\end{equation}
is a lattice and 
\begin{equation} A\spcheck(\C) \simeq V^*/\Lambda^*. \end{equation}
So in simple linear algebraic terms, the dual abelian variety is obtained from the dual lattice (with respect to \eqref{eqn:vvC}).

\begin{example}
Suppose $E=V/\Lambda$ with $V=\C$ and $\Lambda=\Z\tau + \Z$, so $\omega_1=\tau$, $\omega_2=1$ is a $\Z$-basis for $\Lambda$.  Then $V^* = \C e$ where $e(x)=\overline{x}$, and it is straightforward to compute that a $\Z$-basis for $\Lambda^*$ is $\omega_1^*=e/\impart(\overline{\tau})$, $\omega_2^*=-\tau e/\impart(\overline{\tau})=\tau e/\impart(\tau)$.  
\end{example}

Put a bit more abstractly, $A\spcheck(\C)=\bPic^0_A(\C)$ is the kernel of the map 
\begin{equation} \Pic(A) \simeq H^1(A,\O^\times_A) \to H^2(A,\Z), \end{equation} 
the boundary map in the long exact sequence in cohomology induced by the exponential sequence
\[ 0 \to \Z \to \scrO_A \to \scrO_A^\times \to 1 \]
(where $\Z$ is the constant sheaf) arising from $s \mapsto \exp(2\pi i s)$ for a section $s$ of $\scrO_A$.  For further details, see Mumford \cite[section II.9]{Mumford} or Swinnerton-Dyer \cite[\S 8]{SD}.

The universal line bundle \(\mathscr P\) on \(A\times A\spcheck\) of the functor represented by $\bPic^0_A$
is called the \defi{Poincar\'e bundle}: a point $Q \in A\spcheck$ by definition gives a line bundle $\Lbun_Q$ up to isomorphism, and $\mathscr P|_{A\times \{Q\}} \simeq \Lbun_Q$. The bundle $\mathscr P$ furthermore gives rise to a natural isomorphism \cite[Corollary,~p.~132]{Mumford}
\begin{equation}
\begin{aligned}
i_A\colon A &\xrightarrow{\sim} (A\spcheck)\spcheck \\
P &\mapsto \mathscr P|_{\{P\}\times A\spcheck}.
\end{aligned}
\end{equation}

A homomorphism \(\varphi \colon A \to A'\) of abelian varieties induces a dual homomorphism \[\varphi\spcheck \colon (A')\spcheck \to A\spcheck \]
by pullback of line bundles, i.e.~\(\varphi\spcheck\colonequals \varphi^*\colon \bPic^0_{A'} \to \bPic^0_A\). If \(\varphi\) is an isogeny, then so is \(\varphi\spcheck\).  For an isogeny of complex abelian varieties $\varphi \colon V/\Lambda \to V'/\Lambda'$ so $\varphi(\Lambda) \subseteq \Lambda'$, the pullback
\begin{equation}
\begin{aligned}
\varphi\spcheck \colon (V')^*/(\Lambda')^* &\to V^*/\Lambda^* \\
f &\mapsto \varphi^*(f) = f \circ \varphi
\end{aligned}
\end{equation}
indeed gives $\varphi\spcheck((\Lambda')^*) \subseteq \Lambda^*$, and we now have $\ker \varphi\spcheck = \Lambda^*/\varphi\spcheck((\Lambda')^*)$. 

\begin{example}
For $\varphi \colon E\to E'$ an isogeny of elliptic curves, we recover the dual isogeny $\varphi\spcheck \colon \bPic^0(E') \to \bPic^0(E)$ by restricting the pullback map $Q \mapsto \sum_{P \in \varphi^{-1}(Q)} P$ \cite[\S III.6]{SilvermanAEC} (taking classes), then applying the canonical isomorphism $E \cong \bPic^0(E)$ and the same with $E'$.
\end{example}

\subsection{Weil pairing}\label{subsec:Weil}

Recall that a finite-dimensional vector space and its dual admit a tautological perfect pairing.  There is an analogous pairing as follows.

\begin{thm}[Tautological Weil pairing] \label{thm:Weil}
Let $n \in \Z_{>0}$ be coprime to the characteristic of \(k\).  Then there is a canonical perfect, bilinear pairing
\begin{equation} \label{eqn:weilpair}
\langle \cdot, \cdot \rangle_n \colon A[n] \times A\spcheck [n] \to \mu_n,
\end{equation}
compatible with the action of $\Gal_k$.  
\end{thm}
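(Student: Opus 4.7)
The plan is to construct the pairing by a pull-back-and-trivialize recipe and then verify its properties one at a time. Given $Q \in A\spcheck[n]$, I would represent $Q$ by a line bundle $\Lbun_Q \in \Pic^0(A)$ with $\Lbun_Q^{\otimes n} \simeq \scrO_A$. The key input is the standard identity $[n]^* \Lbun \simeq \Lbun^{\otimes n}$ for $\Lbun \in \Pic^0(A)$: combining this with the trivialization $\Lbun_Q^{\otimes n} \simeq \scrO_A$ yields a canonical isomorphism $\phi \colon [n]^*\Lbun_Q \xrightarrow{\sim} \scrO_A$. For $P \in A[n]$, the relation $[n]\circ t_P = [n]$ makes $t_P^*\phi$ a second trivialization of the same bundle, so $t_P^*\phi = \langle P,Q\rangle_n \cdot \phi$ for a unique unit $\langle P,Q\rangle_n \in \Gamma(A,\scrO_A^\times) = \kclosed^\times$ (using that $A$ is proper and geometrically integral).

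I would then check the remaining properties in sequence. That $\langle P,Q\rangle_n^n = 1$ follows by iterating $t_P^*$ a total of $n$ times and invoking $t_P^{\circ n} = t_{nP} = \id$. Bilinearity will be a direct check: additivity in $P$ from $t_{P_1+P_2}^* = t_{P_2}^* \circ t_{P_1}^*$, and additivity in $Q$ from $\Lbun_{Q_1+Q_2} \simeq \Lbun_{Q_1} \otimes \Lbun_{Q_2}$ together with the multiplicativity of the canonical trivializations. Galois equivariance will be automatic, because no choice in the construction depends on the Galois action: for $\sigma \in \Gal_k$, the tautology $\sigma(t_P^*\phi) = t_{\sigma P}^*(\sigma\phi)$ gives $\sigma \langle P,Q\rangle_n = \langle \sigma P, \sigma Q\rangle_n$.

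The main obstacle will be nondegeneracy, which I plan to handle in two steps. First, both $A[n]$ and $A\spcheck[n]$ have order $n^{2g}$, obtainable from \Cref{kercoker:2} applied to the isogeny $[n]$ on $A$ and to its dual $[n]\spcheck$ on $A\spcheck$, so perfectness reduces to injectivity of the induced map $A[n] \to \Hom(A\spcheck[n], \mu_n)$. Second, if some $P \in A[n]$ pairs trivially with every $Q$, then the canonical trivialization $\phi$ is $t_P$-invariant for every $n$-torsion line bundle; feeding this back through the biduality $i_A \colon A \xrightarrow{\sim} (A\spcheck)\spcheck$ will force $P = O$. A more conceptual alternative, which I would mention as a sanity check, is to identify the pairing with the Cartier duality pairing between the finite \'etale group schemes $A[n]$ and $A\spcheck[n]$ (valid since $n$ is coprime to the characteristic), which makes perfectness immediate and compatible with the Galois action by construction.
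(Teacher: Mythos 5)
Your construction is the line-bundle formulation of exactly the same pairing that the paper builds with divisors and rational functions: choosing $g$ with $\ddiv(g)=[n]^*D$ and taking $g(X+P)/g(X)$ is the same (up to a sign convention) as choosing a trivialization $\phi$ of $[n]^*\Lbun_Q$ and taking $t_P^*\phi/\phi$, since $g$ is precisely such a trivialization. Where you diverge from the paper is that the paper stops after writing down the pairing and cites Oda, Mumford, and Hindry--Silverman for bilinearity, perfectness, and Galois compatibility, whereas you attempt to prove them. Your checks of $n$-torsion, bilinearity in $P$ and $Q$, and Galois equivariance are all sound (for bilinearity in $Q$ you should note that the trivialization of $[n]^*\Lbun_Q$ is canonical only up to a scalar, which is harmless since the pairing is the \emph{ratio} $t_P^*\phi/\phi$, but this observation is also what makes the multiplicativity $\phi_{Q_1}\otimes\phi_{Q_2}$ work). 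The weak spot is nondegeneracy: the counting step using \Cref{kercoker:2} is fine, but the injectivity step ``feeding this back through biduality will force $P=O$'' is not yet an argument --- invariance of $\phi$ under $t_P$ for all $n$-torsion $Q$ does not obviously contradict $P\neq O$ without a further input. The clean way to close the gap is exactly the route you relegate to a sanity check: if $\phi$ is invariant under translation by all of $A[n]$ then it descends along $[n]\colon A\to A$ to a trivialization of $\Lbun_Q$, forcing $Q=O$ (and the other direction follows by biduality); equivalently, invoke the canonical isomorphism $\ker(\varphi\spcheck)\simeq(\ker\varphi)^D$ for an isogeny $\varphi$ applied to $[n]$, which is precisely how Mumford and Oda establish perfectness. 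So promote the Cartier-duality identification from a ``sanity check'' to the actual proof of nondegeneracy and the argument is complete.
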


We call \eqref{eqn:weilpair} the \defi{tautological Weil pairing}.  This pairing is an expression of \emph{Cartier duality} (for torsion in abelian varieties), and it is equivalently expressed by a canonical isomorphism 
\begin{equation}
\beta_n \colon A\spcheck[n] \xrightarrow{\sim} \Hom(A[n],\mu_n),
\end{equation}

\begin{proof}[{Proof of \Cref{thm:Weil}}]
For a detailed proof and properties of the pairing, see Oda~\cite[Theorem~1.1]{Oda}.  See also Antieau--Auel \cite[\S 2.2]{AA}.  

A definition of the pairing \eqref{eqn:weilpair} follows along similar lines as in the case of elliptic curves \cite[\S III.8]{SilvermanAEC}: see Mumford \cite[p.~184--185]{Mumford} or Hindry--Silverman \cite[Exercise A.7.8]{HinSil}.  Given $Q \in A\spcheck[n](\kal)$ corresponding to $\scrL=\scrL_Q\simeq\O(D)$, we will construct a map $A[n](\kal)\to\mu_n(\kal)$ as follows. By assumption, both $\Lbun^{\otimes n}$ and $[n]^* \Lbun$ are trivial, and therefore $nD=\ddiv(f)$ and $[n]^*D=\ddiv(g)$ for some $f=f_Q,g=g_Q\in \kal(A)^\times$.  

Since
$$\ddiv(f\circ[n])=[n]^*(nD)=n([n]^*D)=n\ddiv(g)=\ddiv(g^n), $$
the functions $f\circ[n]$ and $g^n$ differ by a scalar constant.  Now for all $X\in A(\kal)$ and $P\in A[n](\kal)$, we have $(f\circ[n])(X+P)=(f\circ[n])(X)$, and therefore $g^n(X+P)/g^n(X)=1$ and $g(X+P)/g(X)$ is constant, independent of $X$ (but may depend on $Q$).  Thus, we obtain a map
\begin{equation}
\begin{aligned}
\beta_n\colon A[n](\kal) &\to \mu_n \\
P &\mapsto\frac{g(X+P)}{g(X)}
\end{aligned}
\end{equation}
(for any choice of $X \in A(\kal)$ such that $g(X)$ and $g(X+P)$ are defined and nonzero), and 
\[ \langle P,Q \rangle_n = g_Q(P)/g_Q(X+P) \]
completing the definition of the pairing.  
\end{proof}

For $\ell \neq \opchar k$, the pairing with $n=\ell^j$ is compatible with the mult\-iplication-by-\(\ell\) map, so together they yield a perfect bilinear pairing on $\ell$-adic Tate modules:
\begin{equation}
\langle \cdot, \cdot \rangle \colon T_\ell A \times T_\ell(A\spcheck) \to \mu_{\ell^\infty}(\kclosed) \simeq \Z_{\ell}.
\end{equation}
We can also adelically put these together
\begin{equation} \label{eqn:adelictate}
\langle \cdot, \cdot \rangle \colon \bT A \times \bT(A\spcheck) \to \mu_\infty'(\kclosed) \simeq \Zhatp.
\end{equation}
Given a separable isogeny $\varphi \colon A \to A'$ with $\ker \varphi \subseteq A[n]$, comparing the left- and right-kernels of the tautological Weil pairing yields a canonical perfect pairing
\begin{equation} \label{eqn:eqnphi}
\ker \varphi \times \ker \varphi\spcheck \to \mu_n.
\end{equation}

Finally, the Weil pairing is equivariant with respect to the action of \(\Gal_k \colonequals \Gal(k^{\sep}\,|\,k)\), where \(\Gal_k\) acts on \(\mu_n\) by the mod \(n\) cyclotomic character \(\varepsilon_n\) \cite[section III.8]{SilvermanAEC} (see also \Cref{lem:cyclotomic}). 
The points of $\mu_n(\kal) = \langle \zeta_n \rangle$ are the $n$th roots of unity and the cyclotomic character
 \begin{equation}
 \varepsilon_n \colon \Gal_k \to \Aut(\mu_n(\kal)) \simeq (\Z/n\Z)^\times, 
 \end{equation}
is uniquely defined by $\sigma(\zeta_n) = \zeta_n^{\varepsilon_n(\sigma)}$ for all $\sigma \in \Gal_k$. 
Then, for all $\sigma \in \Gal_k$ and all $P \in A(\kal)$ and $Q \in A\spcheck(\kal)$, we have
\begin{equation} 
\langle \sigma(P), \sigma(Q) \rangle = \langle P,Q\rangle^\sigma = \varepsilon_n(\sigma)\cdot\langle P,Q \rangle.
\end{equation}

\subsection{Polarizations}\label{subsec:polar}

If $\Lbun$ is an ample line bundle on \(A\), then the morphism 
\begin{equation} \label{eqn:phipullaA}
\begin{aligned}
\varphi_{\Lbun} \colon A &\to A\spcheck \\
P &\mapsto \tau_P^*\Lbun\otimes \Lbun^{-1}
\end{aligned}
\end{equation} 
is an isogeny.  (By contrast, note that if $\Lbun \in \Pic^0(A)$, then \eqref{eqn:phipullaA} is the zero map.)  This observation motivates the following definition.

\begin{defin}\label{def:polarization}
An isogeny $\lambda\colon A\to A\spcheck$ is a \defi{polarization} if there is a finite separable field extension $K\supset k$ and an ample line bundle $\scrL$ on $A_{K}$ so that 
the base change of $\lambda$ to $K$ is equal to \(\varphi_\Lbun\). 

A polarization \(\lambda\) is \defi{principal} if it is an isomorphism, in which case we say that \(A\) is \defi{principally polarized} by $\lambda$.
\end{defin}

Polarizations can also be identified among isogenies over the ground field $k$: they are the isogenies \(\lambda\colon A \to A\spcheck\) where the line bundle \((\id, \lambda)^*\mathscr P\) on \(A\) is ample and $\lambda$ is \defi{symmetric}, meaning that  
\(\lambda\spcheck \circ i_A = \lambda\). 
Given such an isogeny $\lambda\colon A\to A\spcheck$, the line bundle $\scrL$ as in \Cref{def:polarization} can be constructed, after a possible finite separable extension, as a bundle satisfying the relation $[2]^*\scrL\simeq ((\id,\lambda)^*\mathscr{P})^2$ \cite[Theorem~2, p.~188]{Mumford}.

Over the complex numbers, the existence of a polarization distinguishes abelian varieties among complex tori.  Indeed, an ample line bundle on $A(\C)=V/\Lambda$ is specified by a positive definite Hermitian form $H \colon V \times V \to \C$ such that $E \colonequals \impart H$ has $E(\Lambda,\Lambda) \subseteq \Z$ (more generally, see the Appell--Humbert theorem for a linear-algebraic description of line bundles on $A$).  The restriction $E|_{\Lambda} \colon \Lambda \times \Lambda \to \Z$ is alternating, and so there exists a $\Z$-basis of $\Lambda$ in which the Gram matrix is 
\begin{equation} \label{eqnD:dio}
[E]=\begin{pmatrix} 0 & D \\ -D & 0 \end{pmatrix} 
\end{equation}
where $D=\mathrm{diag}(d_1,\dots,d_g)$ is diagonal with $d_i \geq 1$.  Then $\ker \lambda \simeq (\Z/d_1\Z)^2 \oplus \dots \oplus (\Z/d_g\Z)^2$, and so $\lambda$ is principal if and only if $d_1=\dots=d_g=1$.  

\begin{example} \label{exm:princpol}
Elliptic curves are always principally polarized.  We follow  \eqref{eqn:phipullaA}.  For $\scrL=\scrO(D)$, 
\begin{equation} 
\tau_P^* \scrL \otimes \scrL^{-1} \simeq \scrO(\tau_{-P}(D)) \otimes \scrO(-D) \simeq \scrO(\tau_{-P}(D)-D). 
\end{equation}
Now let $D=[O]$ be the origin (divisor of degree $1$); then 
\[ \tau_{-P}([O])-[O]=[-P]-[O] \sim [O]-[P], \] 
since $[P]+[-P]\sim 2[O]$.  Plugging back in, 
\begin{equation}
\tau_P^* \scrL \otimes \scrL^{-1} \simeq \scrO([O]-[P]).
\end{equation} 
Unfortunately, this is the \emph{negative} of the natural isomorphism \(\kappa \colon E \xrightarrow{\sim} E\spcheck\) given by \(P \mapsto \mathscr [P]-[O]\) \cite[\S III.6]{SilvermanAEC}, and thus $\kappa$ does not define a polarization---the sign is caused by moving between line bundles and divisors.
\end{example}

\begin{example}
If $C$ is a nice (i.e., smooth, projective, geometrically integral) curve of genus $g$ over $k$ with $C(k) \neq \emptyset$ , then its Jacobian $J \colonequals \bPic^0(C)$
is principally polarized by the theta divisor \(\Theta \subset J\), the translate under the isomorphism \(J \cong \bPic^{g-1}_C\) of the image of the natural morphism \(\Sym^{g-1}C \to \bPic^{g-1}_C\).  More generally, see Milne \cite[section~1]{Milne}. 

There is a second natural morphism \(J \to J\spcheck\), which is the inverse of the pullback morphism \(j^* \colon J\spcheck \to J\) induced by the inclusion \(j \colon C \hookrightarrow J\). Again, there is a negative sign relating the two: \(\varphi_{\mathscr{O}(\Theta)}=-(j^*)^{-1}\) \cite[Lemma~6.9]{Milne}.
\end{example}

Now let $\lambda \colon A \to A\spcheck$ be a polarization.  Then we can plug it into the tautological Weil pairing, giving a (possibly degenerate) bilinear pairing on $A[n]$:
\begin{equation}\label{usual_Weil}
\begin{aligned}
\langle \cdot,\cdot \rangle_{n,\lambda} \colon A[n] \times A[n] &\to \mu_n \\
(P,Q) &\mapsto \langle P, \lambda(Q) \rangle_n
\end{aligned}
\end{equation}
Taking $\lambda$ to be the principal polarization in \Cref{exm:princpol}, we recover the usual formula for the Weil pairing \cite[section III.8]{SilvermanAEC} (noting how the sign is compensated for).

\begin{lem} \label{lem:kerweil}
The (left or right) kernel of the Weil pairing for $\lambda$ is exactly $\ker \lambda$.
\end{lem}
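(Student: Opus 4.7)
The plan is to treat the right kernel directly using the nondegeneracy of the tautological Weil pairing, and then to reduce the left kernel to the right one using the symmetry of $\lambda$. Throughout, ``$\ker \lambda$'' should be read as $\ker\lambda \cap A[n]$; for $n$ a multiple of the exponent of $\ker\lambda$ this is literally $\ker \lambda$.

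First I would compute the right kernel. If $Q \in A[n]$ and $\langle P, \lambda(Q)\rangle_n = 1$ for every $P \in A[n]$, then since $\lambda(Q) \in A\spcheck[n]$ and the tautological pairing $\langle \cdot, \cdot \rangle_n \colon A[n] \times A\spcheck[n] \to \mu_n$ of \Cref{thm:Weil} is perfect, we conclude $\lambda(Q) = 0$, i.e., $Q \in \ker \lambda$. The converse is immediate. So the right kernel of $\langle \cdot,\cdot\rangle_{n,\lambda}$ is exactly $\ker\lambda \cap A[n]$.

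Next I would show that the left kernel agrees with the right kernel by arguing that $\langle \cdot,\cdot\rangle_{n,\lambda}$ is alternating (equivalently skew-symmetric into $\mu_n$). The key ingredient is the functoriality of the Weil pairing under homomorphisms, which gives the adjointness
\[
\langle \varphi(P),\, R\rangle_n \;=\; \langle P,\, \varphi\spcheck(R)\rangle_n
\]
for any homomorphism $\varphi\colon A \to A'$ and any $P \in A[n]$, $R \in (A')\spcheck[n]$, together with the compatibility of the tautological pairings on $A$ and on $A\spcheck$ under the canonical biduality $i_A$. Applying adjointness to $\lambda \colon A \to A\spcheck$ and then using that $\lambda$ is symmetric, meaning $\lambda\spcheck \circ i_A = \lambda$, one obtains an identity of the form $\langle P,\lambda(Q)\rangle_n = \langle Q,\lambda(P)\rangle_n^{-1}$ for all $P,Q \in A[n]$. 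This immediately forces the left kernel to equal the right kernel, finishing the proof.

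The main obstacle will be the bookkeeping of signs and identifications in the symmetry argument: one must carefully track how the tautological pairing on $A\spcheck \times A\spcheck\spcheck$ transports to that on $A \times A\spcheck$ under $i_A$, and check that the net effect combined with $\lambda\spcheck \circ i_A = \lambda$ is the sign conventions in \Cref{thm:Weil} (compare the sign phenomenon already noted in \Cref{exm:princpol}). Once this is fixed, both halves of the lemma fall out cleanly from nondegeneracy of the tautological pairing.
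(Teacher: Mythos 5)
Your proposal is correct, and for the right kernel it is exactly the paper's argument: perfectness of the tautological pairing $\langle\cdot,\cdot\rangle_n$ forces $\lambda(Q)=O$, and the reverse containment is immediate. The paper's proof in fact stops there, leaving the left kernel implicit, whereas you supply the missing half explicitly via the skew-symmetry $\langle P,\lambda(Q)\rangle_n=\langle Q,\lambda(P)\rangle_n^{-1}$, deduced from adjointness of the Weil pairing and $\lambda\spcheck\circ i_A=\lambda$; this is the standard and correct way to handle it, and the sign/biduality bookkeeping you flag is the only point requiring care. Note also that a counting argument alone would not suffice for the left kernel (it only shows the annihilator of $\lambda(A[n])$ has the right order), so your symmetry step is genuinely needed rather than cosmetic.
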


\begin{proof}
If $Q \in A[n]$ has 
\[ \langle P,Q \rangle_{n,\lambda}=\langle P,\lambda(Q)\rangle_n=1 \] 
for all $P \in A[n]$, then $\lambda(Q)=O$ so $Q \in \ker \lambda_0$.  The other containment is immediate.
\end{proof}

Given a polarization  $\lambda_0\colon A_0\to A_0\spcheck$ associated with an ample line bundle $\scrL$ (cf.~\Cref{def:polarization}), we may construct other polarizations.

\begin{defin}\label{def:pullback}
Let $f\colon A\to A_0$ be an isogeny.
The \defi{pullback} of $\lambda_0$ by $f$ is
the composition $f^*\lambda_0\colonequals f\spcheck\circ\lambda_0\circ f$:
\[\xymatrix{
A \ar[d]_{f} \ar[r]^{f^*\lambda_0} & A\spcheck \\
A_0 \ar[r]^{\lambda_0} & A_0\spcheck\ar[u]_{f\spcheck}
}\]
\end{defin}

The pullback $f^*\lambda_0$ is the polarization associated with the line bundle $f^*\scrL$.  

\begin{defin}\label{def:pushforward} 
Let $\varphi \colon A_0\to A$ be an isogeny and let $d \geq 1$ be minimal such that $\ker(\varphi)\subseteq \ker(d\lambda_0)$. 
A \defi{pushforward} of $\lambda_0$ under $\varphi$ is an isogeny $\varphi_* \lambda_0$ that makes the following diagram commute:
\begin{equation}\label{pushforward}
\begin{aligned}
  \xymatrix{ A_0 \ar[r]^{d\lambda_0}\ar[d]_{\varphi} &A_0\spcheck\\
A \ar[r]^{\varphi_*\lambda_0}&A\spcheck \ar[u]_{\varphi\spcheck}
  }
  \end{aligned}
\end{equation}
\end{defin}

If a pushforward exists, it is necessarily unique, justifying the notation.  
The value of $d$ 
divides the exponent~$e_\varphi$ of $\varphi$ since $A_0[e_\varphi]\subseteq \ker(e_\varphi\lambda_0)$.

General criteria for existence of the pushforward of a polarization are given in \cite[Ch.~23]{Mumford}: $\ker(\varphi)$ must be isotropic under a certain pairing (the commutator pairing of the theta group).  This pairing is bilinear and skew-symmetric, so if $\ker(\varphi)$ is cyclic, the isotropy condition is satisfied. 
Moreover, if $\lambda_0$ is a principal polarization, then the criterion simplifies: a pushforward exists if and only if $\ker \varphi$ is isotropic under the pairing  $\langle\cdot,\cdot\rangle_{d,\lambda_0}$ on $A_0[d]$ 
defined in \eqref{usual_Weil} (see Mumford \cite[(5),~p.~228]{Mumford}). 

\section{Functorial aspects of Tate modules}
\label{sec:category}

Here we introduce the functor from abelian varieties that are isogenous quotients of a fixed abelian variety $A_0$ to sublattices of an associated adelic lattice, and we prove \Cref{thm:main}.  In the next section, we will address 
common computational cases.

In \cref{guiding}, we begin by showing a motivating example of how to conjugate the Galois action of an elliptic curve to find the action on the torsion of an isogenous curve.
In \cref{tw.on} we develop a formal categorical framework for our methods and prove our main result.

Throughout, let $k$ be a field with characteristic $p \geq 0$.  

\subsection{A guiding example}\label{guiding}

We begin with a simple example computing the Galois action on the $\ell$-torsion of an elliptic curve that is isogenous to a curve whose Galois structure is known.  

Let $E$ be an elliptic curve over $k$ and let $\ell\neq p$ be a prime number.
Let $\varphi\colon E\to E'$ be a cyclic isogeny with the choice of basis $P_1,P_2\in T_{\ell}E$ as in \Cref{cyclic_isog}.  We suppose that the point $P_{1,1} \in E[\ell](k)$ generating the kernel is a $k$-rational point, so the isogeny $\varphi$ is defined over $k$ as well. 
Let $\sigma\in \Gal_k$.  Then $\sigma P_{1,1}=P_{1,1}$ and 
$\sigma P_{2,1}=b_1 P_{1,1}+d_1P_{2,1}$ for some $b_1,d_1 \in \F_\ell$ with $d_1 \neq 0$, so the action of $\sigma$ on $E[\ell]$ in this basis is given by 
$$
\begin{pmatrix}
1&b_1\\0&d_1
\end{pmatrix}\in \GL_2(\F_\ell).
$$
(We are taking the column convention; the row convention is also used \cite[Remark 2.1]{RSZB}.)  

We now compute the action of $\sigma$ on a basis for $E'[\ell]$ consisting of images of points under $\varphi$. In \Cref{cyclic_isog} we use the basis $P'_1\colonequals \{\varphi(P_{1,n+1})\}_n$, $P_2' \colonequals \{\varphi(P_{2,n})\}$ for $T_\ell E'$, so we
may take our basis for $E'[\ell]$ to be
$P'_{1,1}=\varphi(P_{1,2})$ and $P'_{2,1}=\varphi(P_{2,1})$.
Note that it is not possible to choose points in $E[\ell]$ whose images under $\varphi$ are a basis for $E'[\ell]$; we must choose at least one of the points in $E$ to be $\ell^2$-torsion. 
To determine the action of $\sigma$ on $P'_{1,1}$, we need to know the action of $\sigma$ on $P_{1,2}$.
The action of $\sigma$ on $E[\ell^2]$ is given by $\begin{pmatrix}
a_2&b_2\\c_2&d_2
\end{pmatrix}\in \GL_2(\Z/\ell^2\Z)$,
where
\[
\begin{pmatrix}
a_2&b_2\\c_2&d_2
\end{pmatrix}
\equiv 
\begin{pmatrix}
1&b_1\\ 0 &d_1
\end{pmatrix}
\pmod{\ell}. \]
If we write $P'_{2,2}\colonequals\varphi(P_{2,2})$, then we have 
\begin{equation}
\begin{aligned}
\sigma P'_{1,1}&=\varphi(\sigma P_{1,2})=\varphi(a_2 P_{1,2}+c_2 P_{2,2})=1P'_{1,1}+c_2P'_{2,2},
  \\  
\sigma P'_{2,1}&=\varphi(\sigma P_{2,1})=\varphi(b_1 P_{1,1}+d_1P_{2,1})=d_1P'_{2,1}.
\end{aligned}
\end{equation}
We know that $c_2=\ell c'$ for some $c'\in \F_{\ell}$, and 
$c_2P'_{2,2}=c' P'_{2,1}$.
The action of $\sigma$ on $E'[\ell]$ is thus given by \[
\begin{pmatrix}
1& 0\\
c'& d_1
\end{pmatrix}.    
\]
It is interesting to see how the matrix giving the action of $\sigma$ on $E'[\ell]$ differs from the one giving the action on $E[\ell]$: it is lower rather than upper triangular. Both $b_1$ and $b_2$ do not appear, but $c_2$ does have an effect. Although it is possible to find a point of $E'[\ell]$ fixed by $\sigma$, the point one might solve for has dependence on $c'$ and $d_1$, which will vary as $\sigma$ does, meaning that $E'[\ell]$ need not have any nontrivial $k$-points. 

Now that we see how that goes, we will reinterpret the above by thinking of it as obtained from a \emph{change of basis} on the $\ell$-adic modules: see \Cref{examplecyclic}.

\subsection{From isogenies to Tate modules}\label{tw.on}

Let $A_0$ be a (fixed) abelian variety over $k$ of dimension $g$; it will function like a basepoint.  Let $\varphi\colon A_0\to A$ be an isogeny. 
Given the action of the Galois group on $T_{\ell}A_0$, we may determine the action on
$T_{\ell} A$ by identifying it with a sublattice of 
$V_{\ell}A_0\colonequals T_{\ell}A_0\otimes \Q_{\ell}$. This approach also facilitates comparing the Galois actions on more than one quotient of $A_0$.  

In this section, we describe this as a functor from isogenies to sublattices.  

In order to introduce the category that keeps track of isogenies $\varphi \colon A_0 \to A$, we will use the following general categorical construction.
Given a category and a particular choice of object, we may form a new category where we restrict our attention to morphisms into (resp.\ out of) that object, called a \defi{slice} (resp.\ \defi{coslice}) category.  

\begin{example}
Let \textsf{CRing}
be the category of commutative rings under ring homomorphisms, and let $R$ be an object of $\textsf{CRing}$, i.e., a commutative ring.  Then the objects and morphisms of the coslice category of \textsf{CRing} under $R$, denoted $\textsf{CRing}^R$, are $R$-algebras and $R$-algebra homomorphisms. 
\end{example}

A natural way to form the category we are interested in is to first consider a category whose objects are abelian varieties and whose morphisms are isogenies, and then take the coslice category for $A_0$.

Let \(\calI_{\kclosed} = \text{\textsf{AbVar}}'_{\kclosed} \) be the category whose objects are abelian varieties over $\kclosed$
 and whose morphisms are isogenies with degree prime to $p$.  Then $\Gal_k$ acts on $\calI_{\kclosed}$ (i.e., its elements act as functors compatible with the usual axioms for a group), with fixed subcategory $\calI_k$, the subcategory with objects and morphisms defined over $k$.
 
The objects of the coslice category of $\calI_{\kclosed}$ under $A_0$, denoted $\cosl_{\kclosed}$, 
are isogenies whose domain is $A_0$; the morphisms of this coslice category are commuting triangles of isogenies:
\begin{equation}\label{coslicemorph}
  \vcenter{
\xymatrix@C=0.8em{
&A_0\ar[dr]^{\varphi'}\ar[dl]_{\varphi} &\\
A\ar[rr]_{\psi}& &A'.
}}
\end{equation}

\begin{rmk}\label{initial}
In a coslice category, the identity morphism on the fixed object is an initial object of the coslice category. 
\end{rmk}  

Next we define the category of sublattices, where we will compute Galois actions.

Let $\TAze$
  be the category whose objects are $\Zhatp$-lattices $T\subset \bV A_0$ containing $\bT A_0$
and whose morphisms are injective maps of lattices in $\bV A_0$ with the diagram
\begin{equation}\label{lattice_morph}
  \vcenter{
\xymatrix@C=0.8em{ &  \bT A_0
\ar@{}[rd]^(.2){}="a"^(0.9){}="b" \ar@{^{(}->} "a";"b"  
\ar@{^{(}->}[ld] & \\
T\, \ar@{^{(}->}[rd]\ar@{^{(}->}[rr]&&T'\ar@{^{(}->}[ld]\\
&\bV A_0 &
}}
\end{equation}
commuting, or equivalently containments $T \hookrightarrow T'$.  In particular, for an object $T$ the quotient $T/\bT A_0$ is finite.  
We consider only injections of lattices in $\TAze$ since, as discussed in \cref{subsec:isogenies}, isogenies induce
 injective maps between Tate modules.  

We now define a functor
\begin{equation}\label{Psi}
\Lambda \colon \cosl_{\kclosed} \to\TAze,\end{equation} 
 which shows how Tate modules of isogenous quotients of $A_0$ are identified with sublattices of $\bV A_0$.

For any $(\varphi\colon A_0\to A) \in \Ob\cosl$,
we have the injective $\Zhatp$-linear map
$\bT\varphi\colon \bT A_0\hookrightarrow \bT A$
and the isomorphism $\bV\varphi\colon\bV A_0\xrightarrow{\sim} \bV A$. 

\begin{defin} \label{defn:lambda}
We define the sublattice of $\bV A_0$ associated with $\bT A$ via $\varphi$ to be
$$\Lambda\varphi\colonequals(\bV\varphi)^{-1}(\bT A),$$ 
which is the 
image of $\bT A$ in $\bV A_0$  under
the dotted arrow in the following commutative diagram:
\begin{equation} \label{eqn:Tvarpi}
\begin{aligned}
\xymatrix@!=2.5pc{
\bT A_0 \ar@{->}[r]^{\bT\varphi} \ar@{->}[d]_{\otimes\Q} &
  \bT A \ar@{->}[d]^{\otimes\Q}
 \ar@{-->}[dl]
  \\
\bV A_0 \ar[r]^{\sim}_{\bV\varphi}  & \bV A.
}
\end{aligned}
\end{equation}
We write the $\ell$-adic part of $\Lambda\varphi$ as $\Lambda_\ell\varphi$.

For a morphism $\psi$ in $\cosl_{\kclosed}$ as in
\eqref{coslicemorph}, we define 
\begin{equation}
\Lambda\psi \colonequals (\bV \varphi)^{-1} \circ \bT \psi \circ \bV \varphi 
\end{equation}
giving a map $\Lambda\psi \colon \Lambda\varphi \to \Lambda\varphi'$. 
\end{defin}

The map $\Lambda\psi$ comes from the commutative diagram \eqref{psi.morphism}. The longer diagonal dotted arrow factors through the shorter diagonal arrow via $\bT \psi$. The map $\Lambda\psi$ is the image of $\bT\psi$ in $\bV A_0$ via the dotted arrows:
\begin{equation}\label{psi.morphism} 
\begin{aligned}
\xymatrix@!=2.8pc{
\bT A_0 \ar@{->}[r]^{\bT\varphi} \ar@{->}[d] 
\ar@/^2pc/[rr]^{\bT\varphi'}
&
\bT A \ar@{->}[r]^{\bT \psi}
\ar@{->}[d]
  \ar@{-->}[dl]&
    \bT A' \ar@{->}[d]
\ar@{-->}[dll]    
  \\
\bV A_0 \ar[r]^{\sim}_{\bV\varphi} \ar@/_2pc/[rr]_{\bV\varphi'}  & \bV A \ar[r]^{\sim}_{\bV \psi} &\bV A'
}
\end{aligned}
\end{equation}
In particular, $\Lambda\psi$ is injective and defines a morphism in the category $\TAze$.

\begin{lem}\label{thm:main-full-functor}
The association $\varphi \mapsto \Lambda\varphi$ in \Cref{defn:lambda} defines a covariant functor $\Lambda \colon \cosl_{\kclosed} \to\TAze$.
\end{lem}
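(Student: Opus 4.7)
The plan is to verify the four conditions defining a covariant functor: that $\Lambda\varphi$ is an object of $\TAze$, that $\Lambda\psi$ is a morphism in $\TAze$, and that $\Lambda$ preserves identities and composition. My main expectation is that, once the relevant lattices are realized inside the common ambient space $\bV A_0$, the map $\Lambda\psi$ collapses to the natural inclusion $\Lambda\varphi \hookrightarrow \Lambda\varphi'$, after which functoriality becomes essentially automatic.

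First, confirm that $\Lambda\varphi = (\bV\varphi)^{-1}(\bT A)$ is a $\Zhatp$-lattice in $\bV A_0$ containing $\bT A_0$. Since $\bV\varphi$ is a $\Qhatp$-linear isomorphism and $\bT A \subseteq \bV A$ is a full $\Zhatp$-lattice, its preimage under $\bV\varphi$ is a full $\Zhatp$-lattice in $\bV A_0$. The containment $\bT A_0 \subseteq \Lambda\varphi$ is the observation that $\bV\varphi$ restricts to $\bT\varphi$ on $\bT A_0$ and $\bT\varphi(\bT A_0) \subseteq \bT A$.

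Next, for a morphism $\psi$ as in \eqref{coslicemorph} with $\varphi' = \psi \circ \varphi$, the goal is to check that $\Lambda\psi$, sending $x \in \Lambda\varphi$ to $(\bV\varphi')^{-1}(\bT\psi(\bV\varphi(x)))$, lands in $\Lambda\varphi'$ and in fact coincides with the identity of $\bV A_0$ restricted to $\Lambda\varphi$. Indeed, for $x \in \Lambda\varphi$ the element $\bV\varphi(x)$ lies in $\bT A$, where $\bT\psi$ agrees with $\bV\psi$, so
\[
\bT\psi(\bV\varphi(x)) = \bV\psi(\bV\varphi(x)) = \bV\varphi'(x) \in \bT A',
\]
showing both that $x \in \Lambda\varphi'$ and that $\Lambda\psi(x) = x$. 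Hence $\Lambda\psi$ is the inclusion $\Lambda\varphi \hookrightarrow \Lambda\varphi'$ inside $\bV A_0$, which is a morphism in $\TAze$.

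Preservation of identities and composition then drops out: $\Lambda(\id_A)$ is clearly the identity on $\Lambda\varphi$, and for composable morphisms $\psi, \psi'$ in $\cosl_{\kclosed}$ both $\Lambda(\psi' \circ \psi)$ and $\Lambda\psi' \circ \Lambda\psi$ equal the identity inclusion $\Lambda\varphi \hookrightarrow \Lambda\varphi''$ by the same argument applied piecewise. Alternatively, one can verify this algebraically by expanding the compositions and canceling a factor of $(\bV\varphi')^{-1} \circ \bV\varphi' = \id$. I do not anticipate any obstacles beyond the diagram-chase through \eqref{psi.morphism}, the only care needed being to apply the correct $\bV$-isomorphism to the correct adelic Tate space.
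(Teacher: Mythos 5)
Your proof is correct and follows essentially the same route as the paper: both reduce everything to functoriality of $\bT$ together with the observation that the relevant lattices and maps all live compatibly inside $\bV A_0$. Your explicit computation that $\Lambda\psi(x)=x$ for $x\in\Lambda\varphi$, so that $\Lambda\psi$ is literally the containment $\Lambda\varphi\hookrightarrow\Lambda\varphi'$, is a clean way of packaging what the paper expresses via the conjugation $\iota^{-1}\,\bT(\,\cdot\,)\,\iota$ and the diagram \eqref{psi.morphism}.
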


\begin{proof}
We indeed get from applying $\Lambda$ an object in the category $\TAze$, since an isogeny $A_0 \to A$ induces an inclusion $\bT A_0 \hookrightarrow \bT A$ giving an inclusion $\bT A_0 = \Lambda(\id_{A_0}) \subseteq \Lambda\varphi$.  

Next we check functoriality: it follows from functoriality of the Tate module, since given
given
\[  \vcenter{
\xymatrix{
&A_0 \ar[d]^{\varphi'} \ar[dr]^{\varphi''}\ar[dl]_{\varphi} &\\
A\ar[r]_{\psi}& A' \ar[r]_{\psi'} &A''
}}
\] 
we have
\[ \Lambda(\psi' \circ \psi) = \iota^{-1} (\bT(\psi' \circ \psi)) \iota = \iota^{-1} (\bT(\psi') \circ \bT(\psi)) \iota = \Lambda\psi'\circ\Lambda\psi \]
where $\iota=\bV \varphi$.  (We could also see this by stacking commutative diagrams like \eqref{psi.morphism}.)  Clearly $\Lambda$ also preserves the identity.
\end{proof}

We are almost ready to prove our main result.  We first prove a useful statement, recalling \Cref{kercoker}.

\begin{prop}\label{prop:Psiphi}
The association
\[ H \mapsto \Lambda(\varphi_H \colon A_0 \to A_0/H) \]
defines an inclusion-preserving bijection between 
\begin{center}
the set of subgroups $H\leq A_{0}[m](\kclosed)$ 
\end{center}
and 
\begin{center}
the set of lattices $L \subseteq (1/m)(\bT A_0)$.
\end{center}
\end{prop}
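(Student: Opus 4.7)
The plan is to factor the claimed bijection through a purely lattice-theoretic one, reducing the proposition to checking that $\Lambda\varphi_H$ picks out the subgroup $H$ under a canonical identification. Throughout, I use that $p \nmid m$, which is implicit in $\varphi_H$ being separable.

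First I would establish the auxiliary inclusion-preserving bijection between sublattices $L$ with $\bT A_0 \subseteq L \subseteq (1/m)\bT A_0$ and subgroups of $A_0[m](\kclosed)$. This is immediate from the correspondence theorem applied to the natural chain of isomorphisms
\[
(1/m)\bT A_0/\bT A_0 \xrightarrow{\sim} \bT A_0/m\bT A_0 \xrightarrow{\sim} A_0[m](\kclosed),
\]
where the first is multiplication by $m$ and the second is the canonical projection from the inverse limit $\bT A_0 = \varprojlim_{p\nmid n} A_0[n](\kclosed)$.

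Next I would verify that $\Lambda\varphi_H \subseteq (1/m)\bT A_0$. Since $H \subseteq \ker[m]_{A_0}$, the universal property of the quotient produces a unique isogeny $\psi \colon A_0/H \to A_0$ with $\psi \circ \varphi_H = [m]_{A_0}$. Applying $\bV$ gives $\bV\psi \circ \bV\varphi_H = m \cdot \id_{\bV A_0}$, so $(\bV\varphi_H)^{-1} = (1/m)\bV\psi$, and hence
\[
\Lambda\varphi_H = (\bV\varphi_H)^{-1}(\bT(A_0/H)) = (1/m)\bT\psi(\bT(A_0/H)) \subseteq (1/m)\bT A_0.
\]

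Finally I would show that $\Lambda\varphi_H$ corresponds to $H$ under the bijection of the first step. Unwinding definitions, the associated subgroup of $A_0[m]$ is the image of $m\Lambda\varphi_H = \bT\psi(\bT(A_0/H))$ under the projection $\bT A_0 \to A_0[m]$, which equals $\psi((A_0/H)[m](\kclosed))$. By \Cref{kercoker:2} its order is $|\coker \bT\varphi_H| = |H|$, so it suffices to show the containment $H \subseteq \psi((A_0/H)[m](\kclosed))$. Given $h \in H$, choose $P \in A_0(\kclosed)$ with $[m]P = h$; then $\varphi_H(P) \in (A_0/H)[m](\kclosed)$ since $[m]\varphi_H(P) = \varphi_H(h) = 0$, and $\psi(\varphi_H(P)) = [m]P = h$, as desired. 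Inclusion-preservation is then inherited from the auxiliary bijection. The main delicacy lies in tracking the identification $(1/m)\bT A_0/\bT A_0 \cong A_0[m]$ through the definition of $\Lambda$; once that bookkeeping is pinned down the remaining verifications amount to a short diagram chase.
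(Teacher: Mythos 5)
Your proposal is correct and takes essentially the same approach as the paper: both factor the correspondence through the identification $(1/m)\bT A_0/\bT A_0 \cong A_0[m](\kclosed)$ obtained from \Cref{kercoker:2} applied to $[m]$, and both use the factorization of $[m]$ through $\varphi_H$. The one notable difference is that you explicitly verify, via a counting argument and the computation $\psi(\varphi_H(P)) = mP = h$, that $\Lambda\varphi_H$ maps onto $H$ under that identification, whereas the paper's proof leaves this compatibility to the naturality of the connecting homomorphism and instead directly exhibits the inverse map; your extra care here is reasonable since that naturality is the crux of the matter.
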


\begin{proof}
Write $L_0 \colonequals \Lambda(\id_{A_0})=\bT A_0$.  The multiplication by $m$ map $A_0 \to A_0$ corresponds to the inclusion $L_0 \hookrightarrow (1/m)L_0$, and \Cref{kercoker:2} gives a natural isomorphism $A_0[m](\kclosed) \xrightarrow{\sim} (1/m)L_0/L_0$.  

Let $H \leq H' \leq A_0(\kclosed)[m]$ be subgroups.  Then we have a composition $A_0 \to A_0/H \to A_0/H' \to A_0$ with the outer map given by multiplication by $m$.  By functoriality, this gives the containments of lattices 
\[ L_0 \subseteq L \subseteq L' \subseteq (1/m)L_0 \]
where $L=\Lambda\varphi_H$ and $L'=\Lambda\varphi_{H'}$.  This shows the map is inclusion-preserving and injective.  To show it is surjective, let $L \subseteq (1/m)L_0$.  Then $L/L_0 \leq ((1/m)L_0)/L_0 \simeq A_0[m](\kclosed)$ maps to a subgroup $H \leq A_0[m](\kclosed)$, and the map $\varphi_H \colon A_0 \to A_0/H$ also has kernel $H$, so $\Lambda\varphi_H=L$.  (This map defines an inverse.)
\end{proof}

We now prove our main result, stated in the current level of generality.

\begin{thm}\label{thm:main-full}
The functor $\varphi \mapsto \Lambda\varphi$ has the following properties.
\begin{enumalph}
\item $\Lambda$ is an equivalence of categories. 
\item $\Lambda$ is equivariant with respect to the action of $\Gal_k$, and it restricts to a functor $\Lambda_k$ which is an equivalence between the category of abelian varieties that are isogenous (over $k$)
to $A_0$ and the category of $\Gal_k$-stable sublattices of $\widehat{V}A_0$.  
\item Given an isogeny $\varphi \colon A \to A_0$, there is a natural isomorphism $\ker \varphi \simeq \Lambda\varphi/\Lambda\id_{A_0} = \coker \widehat{T}\varphi$ equivariant for $\Gal_k$.
\end{enumalph}
\end{thm}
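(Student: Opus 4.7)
The plan is to deduce each of (a), (b), (c) from two facts already set up in the paper---the inclusion-preserving bijection of \Cref{prop:Psiphi} between finite prime-to-$p$ subgroups $H \leq A_0[m](\kclosed)$ and sublattices $L \subseteq (1/m)\bT A_0$, and the natural isomorphism $\ker\varphi \simeq \coker \bT\varphi$ of \Cref{kercoker:2}---together with the standard fact that the Tate module functor is faithful on morphisms of abelian varieties. The main obstacle will be the promotion of $\Lambda_k$ from a faithful, essentially surjective functor to an equivalence of categories, which requires fullness and is where the hypothesis on $k$ is used.

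For (a), faithfulness of $\Lambda$ is immediate from the definition $\Lambda\psi = (\bV\varphi)^{-1}\circ\bT\psi\circ\bV\varphi$: if $\Lambda\psi_1=\Lambda\psi_2$, then $\bT\psi_1=\bT\psi_2$, and a homomorphism of abelian varieties is determined by its induced map on Tate modules. For essential bijectivity I would observe that two objects $\varphi, \varphi'$ of $\cosl_{\kclosed}$ are isomorphic in the coslice category if and only if $\ker\varphi = \ker\varphi'$, so isomorphism classes are in bijection with finite prime-to-$p$ subgroups of $A_0(\kclosed)$; on the other side, the only isomorphisms in $\TAze$ are equalities of sublattices. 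Every sublattice $L$ with $L/\bT A_0$ finite lies in $(1/m)\bT A_0$ for $m$ the exponent of $L/\bT A_0$, and \Cref{prop:Psiphi} then produces a unique finite subgroup $H \leq A_0[m](\kclosed)$ with $\Lambda\varphi_H = L$, giving the required bijection on isomorphism classes.

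For (b), Galois equivariance $\Lambda(\sigma\varphi) = \sigma(\Lambda\varphi)$ is built into the construction: $\bT$ and $\bV$ are natural in the target and $A_0$ is defined over $k$, so $\sigma$ acts on $\bT A_0$ and $\bV A_0$ canonically. Essential surjectivity of $\Lambda_k$ follows because a $\Gal_k$-stable lattice corresponds under the bijection of (a) to a $\Gal_k$-stable finite subgroup $H \leq A_0(\kclosed)$; since $H$ is étale (being prime-to-$p$), $\Gal_k$-stability yields descent to a closed subgroup scheme of $A_0$ over $k$, and the quotient isogeny $A_0 \to A_0/H$ is then defined over $k$. Upgrading to an equivalence of categories requires fullness: every $\Gal_k$-equivariant inclusion of Tate modules must arise from an isogeny defined over $k$. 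This is the heart of the matter and is precisely where the hypothesis on $k$ enters---over a finite field it is Tate's theorem that the natural map $\Hom(A,A')\otimes\Zhatp \to \Hom_{\Gal_k}(\bT A,\bT A')$ is an isomorphism, and over a number field it is Faltings' theorem (the isogeny theorem). I would invoke these results directly; the surrounding formal argument goes through for any base field.

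For (c), the definitions $\Lambda\varphi = (\bV\varphi)^{-1}(\bT A)$ and $\Lambda\id_{A_0} = \bT A_0 = (\bV\varphi)^{-1}(\bT\varphi(\bT A_0))$ inside $\bV A_0$ give, on passing to quotients, a canonical isomorphism
\[
\Lambda\varphi/\Lambda\id_{A_0} \xrightarrow{\sim} \bT A/\bT\varphi(\bT A_0) = \coker \bT\varphi,
\]
which composed with the natural $\Gal_k$-equivariant isomorphism $\ker\varphi \simeq \coker\bT\varphi$ of \Cref{kercoker:2} yields the desired identification. Naturality in $\varphi$ and compatibility with the $\Gal_k$-action are automatic, since every constituent map is natural in $\varphi$ and Galois-equivariant.
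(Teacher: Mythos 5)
Your proof proposal is correct and follows essentially the same route as the paper: part (c) is \Cref{kercoker:2} restated, part (a) rests on faithfulness of $\bT$ together with the bijection of \Cref{prop:Psiphi} (you collapse the paper's separate essential-surjectivity and essential-injectivity arguments into a single bijection on isomorphism classes, which is a harmless repackaging), and part (b) uses functoriality and Galois descent of the subgroup $H$, with fullness supplied by Tate's and Faltings' theorems exactly as the paper does.
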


\Cref{thm:main} is the special case where the base field $k$ is a number field.  

\begin{proof}
We skip ahead and prove part (c): it is a restatement of \Cref{kercoker:2} that for any isogeny $\varphi\colon A_0\to A$, $\ker{\varphi}$ is isomorphic to $\coker{\Lambda\varphi}$.

Next, we prove part (a); equivalence of categories is implied by full faithfulness and essential surjectivity. The functor $\Lambda$ is faithful because there exists at most one morphism between any two objects in $\cosl_{k^s}$: a diagram with $\varphi$ and $\varphi'$ as in \eqref{coslicemorph} can be completed uniquely to a commuting diagram if and only if $\ker\varphi\leq\ker\varphi'$. 
For fullness, consider any objects $\varphi,\varphi'\in\cosl_{k^s}$ and a morphism 
$\Phi\colon \Lambda\varphi\to\Lambda\varphi'$ (see \eqref{lattice_morph}). The morphism $\Phi$ is then a (unique) injection of lattices $\Lambda\varphi\to\Lambda\varphi'$, which induces an injection
$\Lambda\varphi/\bT A_0\to\Lambda\varphi'/\bT A_0$. Thus by (c) (and moreover the isomorphism specified in \Cref{kercoker:2}), we have a containment $\ker\varphi\leq\ker\varphi'$ giving an isogeny $\psi$ which uniquely fills in a diagram as in \eqref{coslicemorph}. Finally, given $\psi$, the morphism $\Lambda\psi$ is a unique injection of lattices  and thus must coincide with $\Phi$.

To see that $\Lambda$ is essentially surjective, let $L \supseteq L_0 \colonequals \Lambda(\id_{A_0})$ be a $\Zhat'$-lattice.  
Then the quotient $L/L_0$ is finite of exponent say $m$, so that $L \subseteq (1/m)L_0$.  We then apply \Cref{prop:Psiphi} to get a subgroup $H \leq A[m](\kclosed)$ such that $\varphi_H \colon A \to A/H$ has $\Lambda \varphi_H=L$. 

For part (b), the functoriality of the Tate module implies equivariance under $\Gal_k$ and objects and isogeny fixed by $\Gal_k$ descend to $k$.  We retain full faithfulness, and the argument for essentially surjectivity in (a) extends as the quotient is $\Gal_k$-stable so the subgroup $H$ is also $\Gal_k$-stable, hence $\varphi_H$ is defined over $k$.  
\end{proof}

\begin{rmk}
There is a functor from $\cosl_{k^s}$ to the category of $\Gal_k$-representations that sends an object $A$ to the Galois representation $\Gal_k\to\Aut(\widehat{T}A)$ and sends isogenies to equivariant maps. Then \Cref{thm:main-full}(b) shows that this functor factors through $\Lambda$. In the next section we furthermore choose change of basis matrices for these equivariant maps, using the lattice maps defined by $\Lambda$.   
\end{rmk}

\section{Change of basis} \label{sec:changeofbasis}

In this section, we explain how to compute with the Galois representation attached to the Tate module of an isogenous abelian variety in matrix terms.  After the main definitions in \cref{basis.recipe}, we show how this works for isogenies defined by their kernel in \cref{kernel}.  We conclude in \cref{sec:polarizationsduals} by addressing polarizations and dual isogenies.

\subsection{Bases}  \label{basis.recipe}

With $A_0$ as a reference object, we choose a $\Zhatp$-basis $\beta_0$ for $L_0 \colonequals \Lambda \id_{A_0} = \bT A_0$.  This gives us a $\Qhatp$-basis for the ambient space $\bV A_0$.  Abbreviate $n \colonequals 2g$ throughout.  

For any isogeny $\varphi \colon A_0 \to A$, we may then choose a basis $\beta$ for $L \colonequals \Lambda\varphi$ and write this in terms of the basis $\beta_0$.  We write these in the columns of a matrix $M=M_\varphi \in \mathrm{M}_{n}(\Qhatp)$; in more standard linear algebra terms, $M=[\id]_{\beta}^{\beta_0}$ is the change of basis matrix for $\bV A_0$ from $\beta$ to $\beta_0$.  

A different choice of basis $\beta$ corresponds to column operations on $M$ and is therefore given by multiplying $M$ on the right by an element of $\GL_{n}(\Zhatp)$, so we obtain a unique class in $\mathrm{M}_{n}(\Qhatp) / \GL_{n}(\Zhatp)$.  

The inverse $M^{-1}=[\id]_{\beta_0}^{\beta}$ is the matrix which describes the inclusion $\bT A_0 \hookrightarrow \Lambda(\varphi)$, and in particular it has entries in $\Zhat'$.  Now, if $\sigma \in \Aut(\bT A_0)$, we obtain a matrix $[\sigma]_{\beta_0} \in \GL_{n}(\Zhatp)$ which describes its action on the basis $\beta$; we then have
\begin{equation}
\vcenter{
\xymatrix{
\bT A_0 \ar[r]^{\sigma} \ar[d] & \bT A_0 \ar[d] \\
\Lambda\varphi \ar[r]^{\sigma} & \Lambda\varphi \\
}}
\end{equation}
and thus
\begin{equation} \label{eqn:sigmabeta}
[\sigma]_\beta = [\id]_{\beta_0}^{\beta} [\sigma]_{\beta_0} [\id]_{\beta}^{\beta_0} = M^{-1} [\sigma]_{\beta_0} M.
\end{equation}

The \emph{Hermite normal form} (an integral echelon form) gives a unique choice for the matrix $M$. For convenience, we rescale $M$ minimally by a positive integer writing $m M= M'$ with now $M' \in \mathrm{M}_n(\Zhatp)$---we then divide back through by $m$ at the end.  By column operations, we may suppose that $M'$ is lower triangular.  We may further multiply by a unique element in $\Zhatp^\times$ so that the diagonal entries $a_i$ are in $\Z_{>0}$ (rescaling the basis elements), and further that the entries in row $i$ below the diagonal are in $\{0,\dots,a_i-1\}$ (taking the elementary matrix which subtracts an appropriate multiple of the $i$th column).  The matrix $M'$ and hence $M$ is then unique with these choices.  However, this may not be the best for any individual calculation; and care must be taken so that this normalization is compatible with composition. In particular, we will not always write our change of basis matrices in this form.

This change of basis also gives us a way to work with compositions, as usual.  For example, given another isogeny $\psi \colon A \to A'$, the change of basis matrix to a basis $\beta'$ of $\bT A'$ for the composition $\psi \varphi \colon A_0 \to A'$ is obtained by multiplying the change of basis matrices:
\begin{equation}\label{eq:composition} [\id]_{\beta'}^{\beta_0} = [\id]_{\beta}^{\beta_0} [\id]_{\beta'}^{\beta}.
\end{equation}

\begin{exm}
Suppose $A=A_0$, i.e., $\varphi$ is an endomorphism of $A_0$ (with finite kernel, so still an isogeny).  In this case, we may choose $\beta=\beta_0$ and $M \in \mathrm{M}_{n}(\Zhatp)$ is the matrix of $\bT \varphi$ in the basis $\beta_0$.

In particular, the multiplication by $m \geq 1$ map on $A_0$ has matrix $\mathrm{diag}(m,\dots,m)$.  
\end{exm}

\subsection{Isogenies given in terms of kernels}\label{kernel}

We now show how to exhibit a change of basis for an isogeny in terms of its kernel.  This amounts to writing out a suitably explicit version of the connecting homomorphism $\delta$ in \Cref{kercoker}.  

We start with the fixed basis $\beta_0$ for $\bT A_0$.  Let $\varphi \colon A_0 \to A$ be an isogeny.  Let $m \in \Z_{\geq 1}$ be such that $\ker \varphi \leq A_0[m](\kclosed)$.  Let $x_1,\dots,x_r \in \bT A_0$ be elements whose reduction modulo $m$ generate $\ker \varphi$.  

\begin{lem} \label{lem:gen}
A generating set for $L=\Lambda\varphi$ is 
\[ \beta_0 \cup \{ x_1/m, \dots, x_r/m \}. \]
\end{lem}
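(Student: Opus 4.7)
My plan is to first verify that every element $x_i/m$ actually lies in $\Lambda\varphi$, and then show that these elements fill in the quotient $\Lambda\varphi/\bT A_0$ using the identification with $\ker\varphi$ established earlier.

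For the first step, I would argue as follows. The short exact sequence
\[ 0 \to \bT A_0 \xrightarrow{\cdot m} \bT A_0 \to A_0[m](\kclosed) \to 0 \]
shows that reducing $x_i$ modulo $m\bT A_0$ recovers the generator $P_i \in \ker\varphi$ of $\ker\varphi$. Applying $\bT\varphi$ and using the analogous sequence for $A$, the element $\bT\varphi(x_i) \in \bT A$ reduces to $\varphi(P_i) = 0$ in $A[m](\kclosed)$, so $\bT\varphi(x_i) \in m \bT A$. Consequently $\bV\varphi(x_i/m) = \bT\varphi(x_i)/m \in \bT A$, which by \Cref{defn:lambda} says exactly that $x_i/m \in \Lambda\varphi$.

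For the second step, I would combine \Cref{prop:Psiphi} with the natural isomorphism $((1/m)\bT A_0)/\bT A_0 \xrightarrow{\sim} A_0[m](\kclosed)$ given by $y/m + \bT A_0 \mapsto y + m\bT A_0$. Because $\ker\varphi \subseteq A_0[m](\kclosed)$, \Cref{prop:Psiphi} places $\Lambda\varphi$ inside $(1/m)\bT A_0$ and identifies the subgroup $\Lambda\varphi/\bT A_0$ with $\ker\varphi \subseteq A_0[m](\kclosed)$. Tracing $x_i/m$ through this identification gives the class $P_i$ of $x_i$ modulo $m$, and by hypothesis the $P_i$ generate $\ker\varphi$. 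Hence the images of $x_1/m, \dots, x_r/m$ generate $\Lambda\varphi/\bT A_0$ as a $\Zhatp$-module.

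Finally, I would conclude by the standard argument: given any $y \in \Lambda\varphi$, we can write $y \equiv \sum_i a_i (x_i/m) \pmod{\bT A_0}$ for some $a_i \in \Zhatp$, and the difference lies in $\bT A_0$, which is generated by $\beta_0$. Therefore $\beta_0 \cup \{x_1/m, \dots, x_r/m\}$ is a generating set for $\Lambda\varphi$. The main potential pitfall is ensuring that the identification $\Lambda\varphi/\bT A_0 \cong \ker\varphi$ used in step two is the same one arising from the connecting map of \Cref{kercoker} (so that lifting $P_i$ to $x_i$ really corresponds to the element $x_i/m$); this is guaranteed by the compatibility built into \Cref{prop:Psiphi}, which avoids having to chase through the snake-lemma definition of $\delta$ directly.
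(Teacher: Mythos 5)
Your proof is correct and takes the same route as the paper, whose proof of this lemma is simply the one-line remark that it is a restatement of \Cref{prop:Psiphi}. You spell out the same argument in more detail, explicitly verifying that each $x_i/m$ lies in $\Lambda\varphi$ and that their classes generate $\Lambda\varphi/\bT A_0 \cong \ker\varphi$.
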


\begin{proof}
This is just another way of writing out \Cref{prop:Psiphi}.  
\end{proof}

To get a matrix, we proceed as follows.  Let 
\[ [x_1]_{\beta_0},\dots,[x_r]_{\beta_0} \in (\Zhatp)^n \] 
be the coordinate vectors of the generators of $\ker \varphi$ in the basis $\beta_0$.  Then by \Cref{lem:gen}, a generating set is given by the columns of the identity matrix horizontally joined to the matrix with columns $(x_1/m,\dots,x_r/m)$.  

To get a basis, we then perform column echelonization of this matrix.  Concretely, we choose a minimal set of generators for $\ker \varphi$ that is lower triangular when written as a matrix---this is again possible by column operations.  It may have some zero entries along the diagonal, so we transport in the corresponding column from the identity matrix whenever a pivot is missing; we see that the remaining columns of the identity matrix are already in the span.  This matrix gives a change of basis matrix $M$ from the previous section.  

Said in an algorithmic (recursive) way, a change of basis matrix $M=(a_{ij})_{i,j}$ is determined from right to left as follows.  Dropping subscripts, let $P_{1},\dots,P_n$ be the image of the basis $\beta_{0}$ in $A_0[m](\kclosed)$.  Let $b_{n,n} \in \Z_{\geq 1}$ be the smallest positive integer such that $b_{n,n} P_n \in \ker \varphi$, and let $a_{n,n} = b_{n,n}/m$.  Let $b_{n-1,n-1} \in \Z_{\geq 1}$ be minimal such that $b_{n-1,n-1} P_{n-1} \in \ker \varphi + \langle P_n \rangle$, and then find minimal $b_{n,n-1} \in \Z/m\Z$ such that 
\begin{equation}
b_{n-1,n-1} P_{n-1} + b_{n,n-1} P_n \in \ker \varphi.
\end{equation}
We then continue this inductively and define $M=(b_{i,j}/m)_{i,j}$.  Note that the matrix $M$ produced in this way has entries in $(1/m)\Z$.

\begin{exm}\label{examplecyclic}
Let $\varphi\colon E\to E'$ be the isogeny of elliptic curves given by \Cref{cyclic_isog}
where $P_1,P_2$ is a basis for $T_\ell E$ and 
$\ker \varphi= \langle P_{1,1}\rangle$.  We take $m=\ell$.  

We have minimally $\ell P_{2,1} \in \ker \varphi$ so $b_{2,2}=\ell$.  And $P_{1,1} \in \ker\varphi$ so $b_{1,1}=1$ and $b_{2,1}=0$.  This gives the change of basis matrix
\[ M = \begin{pmatrix} 1/\ell & 0 \\ 0 & 1 \end{pmatrix}. \]
In other words, $(1/\ell)P_1,P_2$ is a basis for $\Lambda \varphi$.  

We can now describe the Galois action on $T_\ell E'$ by conjugating the action on $T_\ell E$: 
\begin{equation}
  \begin{pmatrix}
\ell&0\\ 0&1
  \end{pmatrix}
  \begin{pmatrix}
a &b\\ c&d
\end{pmatrix}      
  \begin{pmatrix}
1/\ell &0\\ 0&1
  \end{pmatrix}
=  \begin{pmatrix}
a&\ell b\\ c/\ell &d
   \end{pmatrix} 
\equiv
\begin{pmatrix}
1& 0\\ c' &d_1
   \end{pmatrix}
   \pmod{\ell}.
\end{equation}
Despite the presence of the fraction $1/\ell$ in the action we computed, this outcome is still an element of $\GL_2(\Zl)$, since $c \equiv 0 \pmod{\ell}$. As promised, this matrix conjugation recovers the action calculated by hand in \cref{guiding}.
\end{exm}

\subsection{Polarizations and duals}\label{sec:polarizationsduals}

In this section, we look at a special case where isogenies define polarizations or are dual to isogenies where we have already chosen how the functor acts.  

Let $\lambda_0 \colon A_0 \to A_0\spcheck$ be a polarization.  Then by the matrix Frobenius form, as in \eqref{eqnD:dio} we may choose a basis for $\bT A_0$ such that the induced Tate pairing $\bT A_0 \times \bT A_0 \to \Zhatp$ is of the form 
\begin{equation}\label{polarmatrix}
\begin{pmatrix}
  0  & D\\
  -D & 0
\end{pmatrix}
\end{equation}
where $D=\mathrm{diag}(d_1,\dots,d_g)$ and $d_1 \mid \dots \mid d_g$ with $d_i \in \Z_{\geq 1}$.  

\begin{lem}
The matrix \eqref{polarmatrix} defines a change of basis matrix $M^{-1}$ for $\Lambda\lambda_0$.  
\end{lem}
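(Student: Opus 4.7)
The plan is to identify $\Lambda \lambda_0$ explicitly as the dual lattice of $\bT A_0$ with respect to the induced Tate pairing, and then read off the change of basis matrix.

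First I would translate the definition of $\Lambda \lambda_0$ through the Weil pairing. By the adelic version of \Cref{thm:Weil}, the perfect pairing $\bT A_0 \times \bT A_0\spcheck \to \Zhatp$ yields a canonical $\Zhatp$-linear identification $\bT A_0\spcheck \simeq \Hom_{\Zhatp}(\bT A_0, \Zhatp)$, extending to $\bV A_0\spcheck \simeq \Hom_{\Qhatp}(\bV A_0, \Qhatp)$. Under this identification, $\bV \lambda_0$ sends $P \in \bV A_0$ to the linear functional $Q \mapsto \langle Q, P \rangle_{\lambda_0}$, and the sublattice $\bT A_0\spcheck$ of $\bV A_0\spcheck$ corresponds to those functionals that take integer values on $\bT A_0$. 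Consequently
\[
\Lambda \lambda_0 \,=\, (\bV \lambda_0)^{-1}(\bT A_0\spcheck) \,=\, \{\, P \in \bV A_0 : \langle Q, P \rangle_{\lambda_0} \in \Zhatp \text{ for all } Q \in \bT A_0 \,\},
\]
which exhibits $\Lambda \lambda_0$ as the dual lattice of $\bT A_0$ under the induced pairing.

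The claim then reduces to a matrix calculation. Writing $x = [P]_{\beta_0}$ and letting $J$ denote the matrix \eqref{polarmatrix}, the condition that $\langle e_i, P \rangle_{\lambda_0} \in \Zhatp$ for every basis vector $e_i$ of $\beta_0$ translates to $Jx \in (\Zhatp)^{2g}$. Since the $d_i$ are nonzero (because $\lambda_0$ is an isogeny), $J$ is invertible over $\Qhatp$, and this condition is equivalent to $x \in J^{-1}(\Zhatp)^{2g}$. Therefore the columns of $J^{-1}$ form a $\Zhatp$-basis $\beta$ of $\Lambda \lambda_0$ expressed in $\beta_0$-coordinates, so $M = [\id]_\beta^{\beta_0} = J^{-1}$ and $M^{-1} = J$ is the matrix \eqref{polarmatrix}, as claimed.

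The main delicate step is the bookkeeping around the Weil-pairing identification $\bT A_0\spcheck \simeq \Hom(\bT A_0, \Zhatp)$ and verifying that $\bV \lambda_0$ really transports to the correct adjoint of the induced pairing on the chosen side; once conventions are pinned down, the rest is elementary linear algebra (and in particular the Smith/Frobenius normal form plays no role beyond supplying the hypothesized shape of $J$).
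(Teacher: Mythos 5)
Your proposal is correct but takes a genuinely different route from the paper's. You identify $\Lambda\lambda_0 = (\bV\lambda_0)^{-1}(\bT A_0\spcheck)$ directly as the dual lattice of $\bT A_0$ with respect to the pushforward pairing $\langle\cdot,\cdot\rangle_{\lambda_0}$, using perfection of the \emph{tautological} Weil pairing to identify $\bT A_0\spcheck$ with $\Hom_{\Zhatp}(\bT A_0, \Zhatp)$; the change of basis matrix then falls out as the inverse Gram matrix by elementary linear algebra. The paper instead writes down the inverse Gram matrix $J^{-1}$ and \emph{verifies} that its columns span $\Lambda\lambda_0$: it reduces via \Cref{prop:Psiphi} to the claim that the corresponding points in $A_0[m]$ (with $m=d_g$) minimally generate $\ker\lambda_0$, and checks this by a cardinality count combined with \Cref{lem:kerweil} (the kernel of the pushforward pairing equals $\ker\lambda_0$). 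Your argument is shorter and explains conceptually \emph{why} the Gram matrix is $M^{-1}$, while the paper's argument is more concrete and additionally exhibits explicit generators of $\ker\lambda_0$. One point you flagged yourself is worth settling: the convention question of whether the adjoint computation produces $J$ or $J^T$ is harmless here, because the Gram matrix of an alternating pairing is skew-symmetric, so $J^T=-J$ and the lattice $J^{-1}(\Zhatp)^{2g}$ is unchanged. With that remark in place the proposal is complete.
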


\begin{proof}
We compute that the inverse to the Gram matrix is 
\[ \begin{pmatrix} 0 & -D^{-1} \\ D^{-1} & 0 \end{pmatrix} \]
(where $D^{-1}=\mathrm{diag}(1/d_1,\dots,1/d_g)$).  
To prove the lemma, we need to show that
\[ -(1/d_1)P_{g+1},\dots,-(1/d_g)P_{2g}, (1/d_1)P_1,\dots,(1/d_g)P_g \]
is a basis for $\Lambda\lambda_0$.  

Let $m=d_g$, so that $\ker \lambda_0 \leq A_0[m]$ (since $d_1 \mid \dots \mid d_g$).  By \Cref{prop:Psiphi}, as elaborated upon in the previous section, it is equivalent to show that 
\[ -(m/d_1)P_{g+1,m},\dots,-(m/d_g)P_{2g,m}, (m/d_1)P_{1,m},\dots, (m/d_g)P_{g,m} \]
is a minimal generating set for $\ker \lambda_0$.  These elements generate a subgroup of size $(d_1\cdots d_g)^2=\#\ker \lambda_0$, so it suffices to show that they belong to the kernel.  

Recall from \Cref{lem:kerweil} that the kernel of the Weil pairing for $\lambda_0$ is $\ker \lambda_0$.  With this, the verification is straightforward: e.g., for $i \leq g$,
\[ \langle P_j, (m/d_i)P_i \rangle_{m,\lambda_0} \equiv 0 \pmod{m} \]
for all $j$: if $j \neq i+g$ then we get zero, otherwise we get $- d_i (m/d_i) \equiv 0 \pmod{m}$.
\end{proof}

\begin{exm}\label{pol_type}
  Let $\lambda_0\colon A_0\to A_0\spcheck$ be a polarization with type $(d_1,\ldots,d_g)$ and let $\ell\neq p$ a prime.
  Let $D_{\ell}\colonequals \mathrm{diag}(\ell^{n_1},\ldots, \ell^{n_g})$, where $\ell^{n_i}$ is the highest power of $\ell$ dividing $d_i$.
  Choosing a basis for $T_\ell A_0$ as explained above, the change of basis matrix for the $\ell$-adic part of $\lambda_0$ is
 \[M_{\lambda_0, \ell} = \begin{pmatrix}
  0  & -D_\ell^{-1}\\
  D_\ell^{-1} & 0
   \end{pmatrix}.\]
 If $\lambda_0$ is a principal polarization on an elliptic curve, then for any $\ell\neq p$, we get
 $$M_{\lambda_0,\ell}=\begin{pmatrix}0&-1\\1&0
  \end{pmatrix}.$$
If $\lambda_0$ is a $(1,3)$-polarization on an abelian surface (we will see $(1,\ell)$-polarizations in \cref{example.section}), the Gram matrix for the Tate pairing, and hence $M_{\lambda_0,\ell}^{-1}$, is
\[\begin{pmatrix}
    0&0&1&0\\
    0&0&0&3\\
    -1&0&0&0\\
    0&-3&0&0
    \end{pmatrix}.\]
\end{exm}  

\medskip

Next, let $\varphi\colon A\to B$ and suppose we have a change of basis matrix $M_{\varphi}$ associated to $\varphi$. 
We may identify $\bV A\spcheck, \bV B\spcheck$ with the dual vector spaces of $\bV A, \bV B$; the dual of a basis for $\bT A\subset \bV A$ gives a basis for $\bT A\spcheck$.
With these choices, the change of basis matrix $M_{\varphi\spcheck}$ for $\varphi\spcheck\colon B\spcheck \to A\spcheck$ is the transpose of $M_{\varphi}$. 

In the following example, we compute the pullback and pushforward of a principal polarization on an elliptic curve. 

\begin{exm}\label{ex:pushforwardpol}
Let $\varphi\colon E\to E'$ be the cyclic isogeny introduced in \Cref{cyclic_isog}; we computed $M_{\varphi}$ in \Cref{examplecyclic}. Choose a basis for $T_\ell E'$ so that, given the principal polarization $\lambda_0\colon E'\to {E'}\spcheck$, the change of basis matrix is $M_{\lambda_0}=\bigl(\begin{smallmatrix}0&-1\\1&0
        \end{smallmatrix}\bigr)$, as shown in \Cref{pol_type}.

To compute the pullback of $\lambda_0$ by $\varphi$, we use the equality
$\varphi^*\lambda_0=\varphi\spcheck\circ \lambda_0\circ \varphi$ from \Cref{def:pullback} along with \eqref{eq:composition}, and so the change of basis matrix $M_{\varphi^*\lambda_0}$ is given by the following matrix:
\[ M_{\varphi}M_{\lambda_0}M_{\varphi\spcheck} = \begin{pmatrix}
1/\ell&0  \\0&1\end{pmatrix}
 \begin{pmatrix}
   0&-1  \\ 1&0\end{pmatrix}
 \begin{pmatrix}
   1/\ell&0  \\0&1\end{pmatrix}^T
 = \begin{pmatrix}
   0&-1/\ell  \\1/\ell&0\end{pmatrix}.
\]
This means that the pullback $\varphi^*\lambda_0$ is $\ell$ times the principal polarization on $E$ (recalling that the Gram matrix for the polarization is given by $M_{\varphi^*\lambda_0}^{-1}$).

Next, suppose $\lambda_0 \colon E \to E\spcheck$ is the principal polarization on $E$. We may compute the pushforward of
$\lambda_0$ by $\varphi$
since the kernel of $\varphi$ is isotropic under the pairing given by $\lambda_0$.
As $\ell$ is the smallest value so that $\ker(\varphi)\subseteq \ker(\ell\lambda_0)$, we have the following equality from~\Cref{def:pushforward}:
$\ell\lambda_0=\varphi\spcheck\circ \varphi_*\lambda_0\circ  \varphi$. This means $M_{\varphi_*\lambda_0}$ is given by the following matrix:
\[ M_{\varphi}^{-1}M_{\ell\lambda_0}M_{\varphi\spcheck}^{-1}=
\begin{pmatrix}
{\ell}&0  \\0&1
\end{pmatrix}  
\begin{pmatrix}
0&-1/\ell  \\1/\ell&0
\end{pmatrix}
\begin{pmatrix}
{\ell}&0  \\0&1
\end{pmatrix}
=
\begin{pmatrix}
0&-1  \\ 1 &0
\end{pmatrix}.
\]
Thus, the pushforward $\varphi_*\lambda_0$ is a principal polarization on the elliptic curve $E'$.
\end{exm}

\section{An extended example}\label{example.section}

In this section, we give an extended example demonstrating how to use the technical tools developed in sections \ref{sec:category}-\ref{sec:changeofbasis} to compute the Galois action on abelian surfaces which are isogenous to a fixed abelian surface. We start with the construction of an abelian surface $A$, and then we will compute the Galois action on both \(A[\ell]\) and on \(A\spcheck [\ell]\) by comparing \(T_\ell A\) and \(T_\ell A\spcheck\) inside \(V_\ell A_0\) for \(A_0\) a product of elliptic curves isogenous to both \(A\) and \(A\spcheck\). 
\medskip

Fix a prime $\ell$, and let \(E_1\) and \(E_2\) be elliptic curves over \(\Q\) with $P \in E_1[\ell](\Q)$ and $Q \in E_2[\ell](\Q)$  $\Q$-rational $\ell$-torsion points.  Let
\[ G \colonequals \langle (P,Q) \rangle \leqslant E_1 \times E_2
\quad\text{and}\quad
A\colonequals (E_1\times E_2)/G, \]
with the quotient map $q \colon E_1 \times E_2 \to A$. We will consider $A$ with the polarization $\lambda$ which is the pushforward under $q$ (cf.~\Cref{def:pushforward}) of the principal product polarization $\lambda_0$ on $E_1 \times E_2$.  In \cite[Lemma~2.1.2]{FHV_research}, we show that $\lambda$ is a $(1,\ell)$-polarization, although we will not need that fact here. 

This construction is considered in \cite[Construction~2.1.1]{FHV_research}, although that is not the first appearance of such a surface in the literature; it is described on MathOverflow \cite{MO}, implicitly suggested as an exercise \cite[Exercise 6.35]{Goren}, and recently exhibited \cite[Theorem 2.5]{BS}.

\subsection{Computing the Galois action on \texorpdfstring{$A$}{A}}\label{sec:GalactionA}

To understand the action of the Galois group on $A[\ell]$, we use the image of the Galois action on $T_\ell({E}_1\times {E}_2)$, along with a change of basis matrix for \(\Lambda q\subset V_\ell({E}_1\times {E}_2)\), as explained in \cref{sec:changeofbasis}. 

In our example of interest (both here and in \cref{sec:GalactionAdual2}), the isogenies all have degrees which are powers of $\ell$, so we need only consider the $\ell$-adic portion of the Tate modules in question.  In particular, we will be interested in the mod $\ell$ representation, which we obtain by reducing modulo $\ell$ the Tate module.

For any elliptic curve \(E\) with a rational point $P\in E[\ell](\Q)$, let $P_1,P_2$ be a basis for $T_\ell(E)$ such that $P_1 \bmod \ell = P$. Then the image of the $\ell$-adic Galois representation
\[ \rho_{E,\ell}\colon \Gal_{\Q} \to \Aut(T_{\ell}(E)(\Qbar)) \simeq \GL_2(\Z_\ell) \]
is contained in 
\begin{equation} \label{eqn:surjEimg}
\left\{\begin{pmatrix}
 a & b \\ \ell c & d 
 \end{pmatrix} \in \mathrm{M}_2(\Z_\ell) : a,d \in \Z_\ell^\times, a \equiv 1 \bmod \ell \right\}\leqslant \GL_2(\Z_\ell).
 \end{equation}
Moreover, reducing modulo $\ell$, the representation 
\[ \overline{\rho}_{E,\ell}\colon \Gal_{\Q} \to \Aut({E}[\ell](\Qbar)) \simeq \GL_2(\F_\ell) \]
has image contained in
\[\left\{\begin{pmatrix}
 1 & b \\ 0 & d 
 \end{pmatrix} \in \mathrm{M}_2(\F_\ell) : d \in \F_\ell^\times \right\}\leqslant \GL_2(\F_\ell). \]

With this observation in mind, we choose a basis $\{P_1, P_2, Q_1, Q_2\}$ for $T_\ell(  E_1 \times E_2 )\simeq \Z_\ell^4$ that satisfies the following:
\begin{itemize}
    \item $P_1 \bmod \ell =P  \in E_1[\ell](\Q)$,
    \item $Q_1 \bmod \ell =Q \in E_2[\ell](\Q)$,
    \item $\{P_1,P_2\}$ is a symplectic basis for $T_\ell E_1$, and
    \item $\{Q_1,Q_2\}$ is a symplectic basis for $T_\ell E_2$.
\end{itemize}
Then the Galois action on $( E_1\times E_2)[\ell](\Qbar)$ has image contained in the subgroup
\begin{equation}\label{eq:thebigsubmodell}
\left\{ \begin{pmatrix} 1 & b_1 & 0 & 0 \\ 0 & d_1 & 0 & 0 \\ 0 & 0 & 1 & b_2 \\ 0 & 0 & 0 & d_2 \end{pmatrix} \in \mathrm{M}_4(\F_\ell) : a_1, d_1, a_2, d_2 \in \F_\ell^\times \right\}\leqslant \GL_4(\F_{\ell}).
\end{equation}

In fact, there is a further condition on elements in the image of $\bar{\rho}_{E_1\times E_2, \ell}$, determined by the Galois equivariance of the Weil pairing. We summarize this in the following lemma.

\begin{lem}\label{lem:cyclotomic}
For any elliptic curve \(E\) over \(\Q\) and points \(P,Q\in E[\ell](\Qbar)\), the cyclotomic character $\varepsilon_\ell\colon \Gal_{\Q} \to  \Z_\ell^{\times}$ satisfies \(\langle \bar\rho_{E,\ell}(\sigma)P, \bar\rho_{E,\ell}(\sigma)Q\rangle = \varepsilon_\ell(\sigma)\cdot \langle P,Q\rangle\),
for all \(\sigma \in \Gal_\Q\), where \(\langle \cdot, \cdot \rangle\) is the Weil pairing. Moreover, this implies that if \(\bar\rho_{E,\ell}(\sigma)=M\in \GL_2(\F_\ell)\), then \(\varepsilon_\ell(\sigma)=\det M\).
\end{lem}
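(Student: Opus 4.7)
The first assertion is essentially a specialization of the Galois equivariance of the tautological Weil pairing recorded in \Cref{thm:Weil} and the surrounding discussion in \cref{subsec:Weil}. Specifically, the Weil pairing on $E[\ell]$ arises from the tautological pairing $\langle\cdot,\cdot\rangle_\ell\colon E[\ell]\times E\spcheck[\ell]\to\mu_\ell$ composed with the principal polarization $\kappa\colon E\xrightarrow{\sim}E\spcheck$ (up to the sign noted in \Cref{exm:princpol}, which is irrelevant here as it cancels on both sides). Since $\kappa$ is defined over $\Q$, it is $\Gal_\Q$-equivariant, and so the Galois equivariance $\langle P^\sigma,Q^\sigma\rangle=\langle P,Q\rangle^\sigma=\varepsilon_\ell(\sigma)\cdot\langle P,Q\rangle$ established in \cref{subsec:Weil} for the tautological pairing transfers directly to the Weil pairing on $E[\ell]\times E[\ell]$.

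For the second assertion, the plan is to choose a convenient basis and reduce to a direct computation. Fix a symplectic basis $P_1,P_2$ for $E[\ell](\Qbar)$, so that $\zeta\colonequals\langle P_1,P_2\rangle$ is a primitive $\ell$-th root of unity. Write $M=\bar\rho_{E,\ell}(\sigma)=\bigl(\begin{smallmatrix}a&b\\c&d\end{smallmatrix}\bigr)$, so that $\sigma P_1=aP_1+cP_2$ and $\sigma P_2=bP_1+dP_2$. Then bilinearity, alternation, and the identity $\langle P_i,P_i\rangle=1$ give
\[
\langle \sigma P_1,\sigma P_2\rangle \;=\; \langle aP_1+cP_2,\, bP_1+dP_2\rangle \;=\; \langle P_1,P_2\rangle^{ad-bc} \;=\; \zeta^{\det M}.
\]
On the other hand, the first part of the lemma gives $\langle\sigma P_1,\sigma P_2\rangle=\zeta^{\varepsilon_\ell(\sigma)}$. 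Comparing exponents in the cyclic group $\mu_\ell$ of order $\ell$ yields $\det M\equiv\varepsilon_\ell(\sigma)\pmod{\ell}$, which is the desired equality in $\F_\ell^\times$.

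There is no real obstacle: the first part is just a citation of \Cref{thm:Weil} combined with the identification $E\simeq E\spcheck$, and the second part is the standard $2\times 2$ bilinearity calculation. The only minor subtlety to be careful about is the sign convention for $\kappa$ noted in \Cref{exm:princpol}, which does not affect the result because both sides of the equation are pulled back through the same polarization.
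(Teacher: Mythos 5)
Your proof is correct and takes essentially the same route as the paper: the first claim is Galois equivariance of the Weil pairing, and the second is a symplectic-basis computation showing $\langle \sigma P_1,\sigma P_2\rangle=\langle P_1,P_2\rangle^{\det M}$. The paper phrases the computation via the matrix identity $M^T B M=(\det M)B$ for $B=\bigl(\begin{smallmatrix}0&1\\-1&0\end{smallmatrix}\bigr)$ rather than expanding on generators, but the two are interchangeable.
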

\begin{proof}
The first claim follows directly from the Galois equivariance of the Weil pairing \cite[section III.8]{SilvermanAEC}. For the second statement, let \(\{P_1, P_2\}\) be a symplectic basis for \(E[\ell](\Qbar)\), so that, as in \Cref{pol_type}, the Gram matrix for the Weil pairing is 
\[B\colonequals \begin{pmatrix} 0 & 1 \\ -1 & 0\end{pmatrix}.\]
Then for points \(P=a_1P_1+a_2P_2\) and \(Q=b_1P_2+b_2P_2\) in \(E[\ell](\Qbar)\), we have
\[\langle \bar\rho_{E,\ell}(\sigma)P, \bar\rho_{E,\ell}(\sigma)Q\rangle = \begin{pmatrix}a_1 & a_2 \end{pmatrix} M^T B M \begin{pmatrix} b_1 \\ b_2 \end{pmatrix}=\det M\cdot \begin{pmatrix}a_1 & a_2 \end{pmatrix} B \begin{pmatrix} b_1 \\ b_2 \end{pmatrix} = \det M \cdot \langle P,Q\rangle.
\]
Thus, \(\varepsilon_\ell(\sigma)=\det M\).
\end{proof}
Consequently, for our elliptic curves \(E_1\) and \(E_2\), \(\det \bar\rho_{E_1,\ell}(\sigma) = \det \bar\rho_{E_2,\ell}(\sigma)\) for all \(\sigma \in \Gal_\Q\), so $d_1=d_2$.
\Cref{lem:cyclotomic} also holds for any $\ell^n$-torsion points (or more generally on $T_\ell(E)$), and this implies that \(\rho_{E_1\times E_2,\ell}(\Gal_\Q)\) is contained in
\begin{equation}\label{eq:thebigsub}
G_\ell \colonequals \left\{ \begin{pmatrix} a_1 & b_1 & 0 & 0 \\ \ell c_1 & d_1 & 0 & 0 \\ 0 & 0 & a_2 & b_2 \\ 0 & 0 & \ell c_2 & d_2 \end{pmatrix}\in \mathrm{M}_4(\Z_\ell) : 
\begin{minipage}{27ex}
$a_1, d_1,a_2,d_2 \in \Z_\ell^\times$, \\ 
$a_1 \equiv a_2 \equiv 1 \bmod{\ell}$, and \\
$a_1d_1-\ell b_1c_1 = a_2d_2-\ell b_2c_2$
\end{minipage}
\right\}\leqslant \GL_4(\Z_{\ell}). 
\end{equation}

For convenience, we rewrite the elements in $G_\ell$ as
\begin{equation}\label{eq:matrixl2}
\begin{pmatrix} 1+x_1\ell & b_1+y_1\ell & 0 & 0 \\ w_1\ell & d+z_1\ell & 0 & 0 \\ 0 & 0 & 1+x_2\ell & b_2+y_2\ell \\ 0 & 0 & w_2\ell & d+z_2\ell \end{pmatrix}= \begin{pmatrix} A_1 & 0 \\ 0 & A_2 \end{pmatrix}
\end{equation}
where:
\begin{itemize}
\item $d \in \{1,\dots,\ell-1\}$,
\item $b_1,b_2 \in \{0,\dots,\ell-1\}$, and
\item $w_i, x_i, y_i, z_i \in \Z_\ell$
\end{itemize}
still subject to the condition (\Cref{lem:cyclotomic}) that
\begin{equation}\label{detcond} 
\det A_1 =\det A_2. 
\end{equation}

Now, we follow the discussion in~\cref{sec:changeofbasis} to write down the change of basis matrix for $\Lambda_\ell q \subseteq V_\ell ({E}_1\times {E}_2)$. 
Recall that $A=(E_1\times E_2)/\langle(P,Q)\rangle$, so we can take $\{P_1, P_2, \frac{1}{\ell}(P_1+Q_1), Q_2\}$ as a basis for $\Lambda_\ell q$. 
Then the change of basis matrix $M_{q}$ 
is given by
\begin{equation}\label{eq:Mq}
  M_{q}=\begin{pmatrix} 1 & 0 & 1/\ell & 0 \\ 0 & 1 & 0 & 0 \\ 0 & 0 & 1/\ell & 0 \\ 0 & 0 & 0 & 1\end{pmatrix}.  
\end{equation}
This choice does not follow the algorithmic approach given after~\Cref{lem:gen} for finding the change of basis matrix, but it is a straightforward exercise to see how this choice differs from that by column operations. 
As in \eqref{eqn:sigmabeta}, to understand the Galois action on $A[\ell](\Qbar)$, we conjugate the elements \eqref{eq:matrixl2} above by $M_{q}$, 
which gives
\begin{equation}\label{eq:Amodl2matrices}
\begin{pmatrix} 1+x_1\ell & b_1+y_1\ell & x_1-x_2 & -b_2-y_2\ell \\ w_1\ell & d+z_1\ell & w_1 & 0 \\ 0 & 0 & 1+x_2\ell & b_2\ell+y_2\ell^2 \\ 0 & 0 & w_2 & d+z_2\ell \end{pmatrix},
\end{equation}
with the same conditions on the variables.  To get the image of $\bar\rho_{A,\ell}\colon \Gal_{\Q} \to \GL_4(\F_\ell)$, we reduce this subgroup modulo $\ell$. That is, the image of $\bar\rho_{A,\ell}$ is contained in the subgroup
\[\left\{ \begin{pmatrix} 1 & b_1 & x_1-x_2 & -b_2 \\ 0 & d & w_1 & 0 \\ 0 & 0 & 1 & 0 \\ 0 & 0 & w_2 & d \end{pmatrix} \in \mathrm{M}_4(\F_\ell) : 
\begin{minipage}{13ex}
$d \in \F_\ell^{\times}$\\
$b_i,w_i,x_i \in\F_\ell$
\end{minipage} \right\}\leqslant \GL_4(\F_\ell).\]  

\subsection{Computing the Galois action on \texorpdfstring{$A\spcheck$}{Av}}\label{sec:GalactionAdual2}

Next, we compute the Galois action on $A\spcheck[\ell](\Qbar)$, again using the framework developed in sections \ref{sec:category}-\ref{sec:changeofbasis}.
To do this, we will use the isogeny between $A$ and $A\spcheck$
given by the $(1,\ell)$-polarization $\lambda$ on $A$
to relate their Galois representations. 

First, as mentioned in \cref{sec:GalactionA}, the polarization $\lambda$ on $A$ is the pushforward of the principal polarization $\lambda_0$ on $E_1\times E_2$
by the quotient isogeny $q$ (cf.~Definition~\ref{def:pushforward}), as shown in the following commutative diagram:
\begin{equation}\label{pushq}
\vcenter{
\xymatrix{
E_1\times E_2 \ar[r]^-{\ell\lambda_0} \ar[d]^{q} & (E_1\times E_2)\spcheck \\
A \ar[r]^{\lambda} \ar[r]^{\lambda} & A\spcheck \ar[u]_{q\spcheck}.
}
}
\end{equation}
This commutative diagram allows us to directly compare the actions of the Galois group on $T_{\ell}A$ and $T_{\ell}A\spcheck$ as sublattices of $V_{\ell}(E_1\times E_2)$. If $M_\lambda$ is the change of basis matrix from a basis of $T_\ell A\spcheck$ in $V_\ell A$, we must conjugate the elements \eqref{eq:Amodl2matrices}, which describe the Galois action on $T_\ell A$, by $M_\lambda$. Alternatively (and equivalently), the change of basis matrix associated to the composition $\lambda q$ is given by $M_qM_\lambda$, as in \eqref{eq:composition}, so we could conjugate the elements \eqref{eq:matrixl2}, which describe the Galois action on $T_\ell(E_1\times E_2)$, by this product.   

To find a change of basis matrix for $\lambda$ which is compatible with the bases already chosen for $T_\ell(E_1\times E_2)$ and $T_\ell A$, we use the equality $\ell\lambda_{0}=q\spcheck\circ \lambda\circ q$ from \eqref{pushq}, which means $M_\lambda = M_q^{-1}M_{\ell \lambda_0}(M_q^T)^{-1}$. Following the discussion about change of basis matrices for polarizations in \cref{sec:polarizationsduals} (cf.~\Cref{ex:pushforwardpol}), we find 
\[M_{\ell \lambda_0}=\begin{pmatrix} 0 & -1/\ell & 0 & 0 \\ 1/\ell & 0 & 0 & 0 \\ 0 & 0 & 0 & -1/\ell \\ 0 & 0 & 1/\ell & 0
\end{pmatrix}.\]
Thus, 
\[
M_{\lambda}=\begin{pmatrix} 1 & 0 & -1 & 0 \\ 0 & 1 & 0 & 0 \\ 0 & 0 & \ell & 0 \\ 0 & 0 & 0 & 1\end{pmatrix}
\begin{pmatrix} 0 & -\frac1\ell & 0 & 0 \\ \frac1\ell & 0 & 0 & 0 \\ 0 & 0 & 0 & -\frac1\ell \\ 0 & 0 & \frac1\ell & 0
\end{pmatrix}
\begin{pmatrix} 1 & 0 & -1 & 0 \\ 0 & 1 & 0 & 0 \\ 0 & 0 & \ell & 0 \\ 0 & 0 & 0 & 1\end{pmatrix}^T
=
\begin{pmatrix}
0&-\frac1\ell &0 &\frac1\ell \\
\frac1\ell&0&0 & 0\\
0&1 &0 & -1\\
-\frac1\ell & 0 & 1 &0
\end{pmatrix}.
\]
One can check that the cokernel of $M_\lambda^{-1}$, which we recall from
\Cref{kercoker} is isomorphic to the kernel of \(\lambda\),
has $\ell^2$ elements, confirming that the polarization type of $\lambda$ is $(1,\ell)$.
Finally, conjugating the Galois action on $T_\ell A$ by $M_\lambda$ gives the subgroup 
\[\left\{ M_\lambda^{-1} \begin{pmatrix} 1+x_1\ell & b_1+y_1\ell & x_1-x_2 & -b_2-y_2\ell \\ w_1\ell & d+z_1\ell & w_1 & 0 \\ 0 & 0 & 1+x_2\ell & b_2\ell+y_2\ell^2 \\ 0 & 0 & w_2 & d+z_2\ell \end{pmatrix} M_\lambda\in\mathrm{M}_4(\Z_\ell) : 
\begin{minipage}{16ex}
$d\in \F_\ell^{\times}$,\\ $b_i\in \F_\ell$,\\ $w_i,x_i, y_i, z_i \in \Z_\ell$ 
\end{minipage}
\right\} \]
\[=\left\{ \begin{pmatrix}
d + z_1\ell & -w_1\ell &  0 &  0 \\
-b_1 - y_1\ell & 1 + x_1\ell & 0 & 0 \\
z_1 - z_2 & -w_1 & d + z_2\ell & -w_2 \\
 b_2 + y_2\ell &  0 &  -b_2\ell - y_2\ell^2 & 1 + x_2\ell   
\end{pmatrix}\in \mathrm{M}_4(\Z_\ell):
\begin{minipage}{16ex}
$d\in \F_\ell^{\times}$,\\ $b_i\in \F_\ell$,\\ $w_i,x_i, y_i, z_i \in \Z_\ell$ 
\end{minipage}
\right\}\]
in \(\GL_4(\Z_\ell)\). This subgroup reduces $\bmod\,\ell$ to the subgroup
\[\left\{ \begin{pmatrix}
d  & 0 &  0 &  0 \\
-b_1  & 1  & 0 & 0 \\
z_1 - z_2 & -w_1 & d  & -w_2 \\
 b_2  &  0 &  0 & 1    
\end{pmatrix}\in\mathrm{M}_4(\F_\ell) : 
\begin{minipage}{13ex}
$d \in \F_\ell^{\times}$\\
$b_i,w_i,z_i\in \F_\ell$
\end{minipage}
\right\}\leqslant \GL_4(\F_\ell).\]
Thus, the image of the mod $\ell$ representation $\bar\rho_{A\spcheck,\ell}\colon \Gal_{\Q} \to \Aut({A\spcheck}[\ell](\Qbar)) \simeq \GL_4(\F_\ell)$ is contained in this subgroup.

\begin{rmk}
    The Galois action on $A\spcheck$ can also be computed using the fact that $\rho_{A\spcheck, \ell}$ is the contragredient representation to $\rho_{A,\ell}$ twisted by the cyclotomic character \cite[Lemma~2.4.1]{FHV_research}. That result, given in \cite[Proposition~2.4.2]{FHV_research}, precisely matches this computation. The following determinantal condition \eqref{detcond} from the cyclotomic character is necessary to compare the two:
    \[
z_1-z_2 = b_1w_1-b_2w_2-dx_1+dx_2 \in \F_\ell.
    \]
\end{rmk}



\begin{thebibliography}{FLV23+}

\bibitem[AA21]{AA}
Benjamin Antieau and Asher Auel, \emph{Explicit descent on elliptic curves and splitting Brauer classes}, preprint, 2021, \texttt{arXiv:2106.04291}.

\bibitem[BL04]{Birkenhake-Lange}
Christina Birkenhake and Herbert Lange, \emph{Complex abelian varieties}, 2nd. ed., Grundlehren der Mathematischen Wissenschaften, vol.~302, Springer-Verlag, Berlin, 2004.

\bibitem[BS23]{BS}
Pawel Bor\'owka and Anatoli Shatsila, \emph{Hyperelliptic genus 3 curves with involutions and a Prym map}, 2023, preprint, \texttt{arXiv:2308.07038}.

\bibitem[CP10]{MO}
Brian Conrad and Bjorn Poonen, \emph{Non-principally polarized complex abelian varieties}, 2010, \url{https://mathoverflow.net/q/17014}.

\bibitem[Fon77]{Dieudonne}
Jean-Marc Fontaine, \emph{Groupes $p$-divisibles sur les corps locaux}, Ast\'erisque, no.~47-48, Soci\'et\'e Math\'ematique de France, Paris, 1977.

\bibitem[FHV23]{FHV_research}
Sarah Frei, Katrina Honigs, and John Voight,
\emph{On abelian varieties whose torsion is not self-dual}, 2023.

\bibitem[Gor02]{Goren}
Eyal Z.\ Goren, \emph{Lectures on Hilbert modular varieties and modular forms}, CRM Monograph Ser., vol.~14, Amer.\ Math.\ Soc., Providence, RI, 2002.

\bibitem[HS00]{HinSil}
Marc Hindry and Joseph S.~Silverman, \emph{Diophantine geometry: an introduction}, Grad.\ Texts in Math., vol.~201, Springer, New York, 2000.

\bibitem[K81]{Katz}
Nicholas M.~Katz, \emph{Galois properties of torsion points on abelian varieties}, Inv.\ Math.\ \textbf{62} (1981), 481--502.

\bibitem[Kle14]{Kleiman}
Steven L.~Kleiman, \emph{The Picard scheme}, Alexandre Grothendieck: a mathematical portrait, 35–74.
Int.~Press, Somerville, MA, 2014.

\bibitem[Lan13]{MR3186092}
Kai-Wen Lan, \emph{Arithmetic compactifications of PEL-type Shimura varieties}, London Math.\ Soc.\ Monogr.\ Ser., vol.~36, Princeton University Press, Princeton, 2013.

\bibitem[Mil86a]{Milneabvar}
J.S.~Milne, \emph{Abelian varieties,} Arithmetic geometry (Storrs, Conn., 1984), Springer-Verlag, New York, 1986, 103--150. 

\bibitem[Mil86b]{Milne}
J.S.~Milne, \emph{Jacobian varieties,} Arithmetic geometry (Storrs, Conn., 1984), Springer-Verlag, New York, 1986, 167--212.

\bibitem[Mil08]{Milne2}
James S.~Milne, \emph{Abelian varieties (v2.00)}, 2008, available at \url{http://www.jmilne.org/math/}.

\bibitem[Mum70]{Mumford}
David Mumford, \emph{Abelian varieties}, Tata Institute of Fundamental Research Studies in Mathematics, vol.~5, reprint of 2nd ed., Hindustan Book Agency, New Delhi, 2008.

\bibitem[Oda69]{Oda}
Tadao Oda, \emph{The first de Rham cohomology group and Dieudonn\'e modules}, Ann. Sci. \'Ecole Norm. Sup. (4)2(1969), 63–135.

\bibitem[RSZB+22]{RSZB}
Jeremy Rouse, Andrew V.~Sutherland, and David Zureick-Brown, \emph{$\ell$-adic images of Galois for elliptic curves over $\mathbb {Q}$}, appendix with John Voight, Forum Math., Sigma \textbf{10} (2022), e62.

\bibitem[Sil09]{SilvermanAEC}
Joseph H.\ Silverman, \emph{The arithmetic of elliptic curves}, 2nd ed., Grad.\ Texts in Math., vol.~106, Springer, Dordrecht, 2009.

\bibitem[SD74]{SD}
H.~P.~F. Swinnerton-Dyer, {\it Analytic theory of abelian varieties}, London Mathematical Society Lecture Note Series, no.~14, Cambridge Univ. Press, London-New York, 1974.
\end{thebibliography}
\end{document}